\theoremstyle{plain}
\newtheorem{theorem}{Theorem}[section]    
\newtheorem{twisting lemma}[theorem]{Twisting lemma}
\newtheorem{lemma}[theorem]{Lemma}       
\newtheorem{proposition}[theorem]{Proposition}  
\newtheorem{corollary}[theorem]{Corollary}   
\theoremstyle{remark}
\newtheorem{definition}[theorem]{Definition}      
\newtheorem{remark}[theorem]{Remark}   
\def\cm{\hbox{\hbox{\rm C}\kern-5pt{\raise 1pt\hbox{$|$}}}}
\def\lhfl#1#2{\smash{\mathop{\hbox to 12mm{\leftarrowfill}}
\limits^{#1}_{#2}}}
\def\rhfl#1#2{\smash{\mathop{\hbox to 12mm{\rightarrowfill}}
\limits^{#1}_{#2}}}
\def\build#1_#2^#3{\mathrel{
\mathop{\kern 0pt#1}\limits_{#2}^{#3}}}
\def\htrait#1#2{\smash{\mathop{\hbox to 12mm{\hrulefill}}
\limits^{#1}_{#2}}}
\def\sxbullet{{\raise 2pt\hbox{\bf .}}}
\begin{document}

\title{Specialization results and ramification conditions}

\author{Fran\c cois Legrand}

\email{Francois.Legrand@math.univ-lille1.fr}

\address{Laboratoire Paul Painlev\'e, Math\'ematiques, Universit\'e Lille 1, 59655 Villeneuve d'Ascq Cedex, France}

\date{\today}

\maketitle

\begin{abstract}
Given a hilbertian field $k$ of characteristic zero and a finite Galois extension $E/k(T)$ with group $G$ such that $E/k$ is regular, we produce some specializations of $E/k(T)$ at points $t_0 \in \mathbb{P}^1(k)$ which have the same Galois group but also specified inertia groups at finitely many given primes. This result has two main applications. Firstly we conjoin it with previous works to obtain Galois extensions of $\mathbb{Q}$ of various finite groups with specified local behavior - ramified or unramified - at finitely many given primes. Secondly, in the case $k$ is a number field, we provide criteria for the extension $E/k(T)$ to satisfy this property: at least one Galois extension $F/k$ of group $G$ is not a specialization of $E/k(T)$. 
\end{abstract}

\section{Introduction}

The {\it{Inverse Galois Problem}} (IGP) over a given number field $k$ asks whether any given finite group $G$ occurs as the Galois group of a finite Galois extension $F/k$. Refined versions of the IGP over $k$ impose some further conditions on the local behavior at finitely many primes of $k$. For example, we may require no prime of a given finite set $\mathcal{S}$ to ramify in $F/k$. From a theorem of Shafarevich, this is always possible if $k= \mathbb{Q}$ and $G$ is solvable \cite[theorem 6.1]{KM04}. Moreover, if $G$ has odd order, one can add the {\it{Grunwald}} conclusion: the completion of $F/\mathbb{Q}$ at each prime $p \in \mathcal{S}$ can be prescribed \cite{Neu79} \cite[(9.5.5)]{NSW08}. Here we are interested in ramification prescriptions at finitely many given primes.

From the {\it{Hilbert Irreducibility Theorem}}, Galois extensions of $k$ of group $G$ can be obtained by specializing Galois extensions $E/k(T)$ with group $G$ such that $E/k$ is {\it{regular}}\footnote{{\it{i.e.}} $E \cap \overline{k} = k$; see \S2.1 for basic terminology.}; many groups occur as the Galois group of such an extension. Let $E/k(T)$ be a Galois extension of group $G$ such that $E/k$ is regular and $\{t_1,\dots,t_r\}$ its branch point set. Our question is whether, for suitable points $t_0 \in \mathbb{P}^1(k) \smallsetminus \{t_1,\dots,t_r\}$, in addition to ${\rm{Gal}}(E_{t_0}/k)=G$, one can prescribe the inertia groups of the specialization $E_{t_0}/k$ of $E/k(T)$ at $t_0$ at finitely many given primes.

Given a prime $\mathcal{P}$ of $k$, not in the finite list of {\it{bad primes for $E/k(T)$}} (definition \ref{bon premier}), and a point $t_0 \in \mathbb{P}^1(k) \smallsetminus \{t_1,\dots,t_r\}$, a classical necessary condition for $\mathcal{P}$ to ramify in $E_{t_0}/k$ is that {\it{$t_0$ meets some branch point $t_{i_\mathcal{P}}$ modulo $\mathcal{P}$}} (definition \ref{rencontre}). As a consequence, $\mathcal{P}$ should admit a prime divisor of residue degree 1 in the field extension $k(t_{i_{\mathcal{P}}})/k$ (say for short that ``$t_{i_{\mathcal{P}}}$ is rationalized by $\mathcal{P}$"). Moreover the inertia group of $E_{t_0}/k$ at $\mathcal{P}$ is known to be generated by some power $g_{i_{\mathcal{P}}}^{a_\mathcal{P}}$ (depending on $t_0$ and $t_{i_\mathcal{P}}$) of the distinguished generator $g_{i_\mathcal{P}}$ of some inertia group of $E\overline{\mathbb{Q}}/\overline{\mathbb{Q}}(T)$ at $t_{i_\mathcal{P}}$. We refer to $\S$2.2 for a precise statement (the ``Specialization Inertia Theorem"), more details and references.

Our main result in $\S$3.1 provides some converse to the latter conclusion: for all primes $\mathcal{P}$ but in a certain finite list $\mathcal{S}_{\rm{exc}}$, if $\mathcal{P}$ rationalizes $t_{i_{\mathcal{P}}}$, in particular if $t_{i_{\mathcal{P}}}$ is itself $k$-rational, then it is possible to prescribe the above exponent $a_{\mathcal{P}}$ for some suitable points $t_0 \in \mathbb{P}^1(k) \smallsetminus \{t_1,\dots,t_r\}$.

Denote the inertia canonical invariant of $E/k(T)$ by $(C_1,\dots,C_r)$, {\it{i.e.}}, for each $i =1,\dots,r$, $C_i$ is the conjugacy class in $G$ of $g_i$ (see $\S$2.1).

\vspace{3mm}

\noindent
{\bf{Theorem 1}} (corollary \ref{Hilbert}) {\it{Let $\mathcal{S}$ be a finite set of primes $\mathcal{P}$ of $k$ not in the finite list $\mathcal{S}_{\rm{exc}}$, each given with a couple $(i_\mathcal{P},a_\mathcal{P})$ where

\vspace{0.5mm}

\noindent
- $i_\mathcal{P}$ is an index in $\{1,\dots,r\}$ such that $t_{i_\mathcal{P}}$ is rationalized by $\mathcal{P}$,

\vspace{0.5mm}

\noindent
- $a_\mathcal{P}$ is a positive integer.

\vspace{1mm}

\noindent
Then there exist infinitely many distinct points $t_0 \in \mathbb{P}^1(k) \smallsetminus \{t_1,\dots,t_r\}$ such that the specialization $E_{t_0}/k$ of $E/k(T)$ at $t_0$ satisfies the following two conditions:

\vspace{0.5mm}

\noindent
{\rm{(1)}} ${\rm{Gal}}(E_{t_0}/k)=G$,

\vspace{0.5mm}

\noindent
{\rm{(2)}} for each prime $\mathcal{P} \in \mathcal{S}$, the inertia group of $E_{t_0}/k$ at $\mathcal{P}$ is generated by some element of $C_{i_\mathcal{P}}^{a_\mathcal{P}}$.}}

\vspace{3mm}

\noindent
Our condition $\mathcal{P} \not \in \mathcal{S}_{\rm{exc}}$ on the primes is that $\mathcal{P}$ should be {\it{a good prime for $E/k(T)$}} such that $t_{i_\mathcal{P}}$ and $1/t_{i_\mathcal{P}}$ are integral over the localization $A_{\mathcal{P}}$ of the integral closure $A$ of $\mathbb{Z}$ in $k$ at $\mathcal{P}$.

Part (2) of the conclusion is proved in a more general situation with the number field $k$ replaced by the quotient field of any Dedekind domain $A$ of characteristic zero and holds for all (but finitely many) points $t_0$ in an arithmetic progression (theorem \ref{Beckmann4}). Furthermore part (1) is satisfied if $k$ is hilbertian or if the inertia canonical invariant of $E/k(T)$ satisfies some {\it{$g$-complete}} hypothesis. We refer to $\S$3.1.2 for more details and extra conclusions on the set of points $t_0$ at which conditions (1) and (2) above simultaneously hold.

Related conclusions can be found in an earlier paper of Plans and Vila \cite{PV05}, for specific finite Galois extensions $E/\mathbb{Q}(T)$ such that $E/\mathbb{Q}$ is regular, generally derived from the rigidity method. Here there are no restriction on the extension $E/\mathbb{Q}(T)$ and the inertia groups may be specified. However a finite list of primes is excluded from our conclusions; in particular any wild ramification situation is left aside.

\vspace{-0.2mm}

Many finite groups are known to occur as the Galois group of a Galois extension $E/\mathbb{Q}(T)$ such that $E/\mathbb{Q}$ is regular (fix $k=\mathbb{Q}$ for simplicity) and with at least one $\mathbb{Q}$-rational branch point (for example, the Monster group does), in which case theorem 1 produces Galois extensions of $\mathbb{Q}$ with the same group which ramify at any finitely many given large enough primes. Some examples are given in $\S$3.2.

\vspace{-0.2mm}

However the assumption on the branch points cannot be removed. Indeed, given an odd prime $p$, Galois extensions of $\mathbb{Q}$ of group $\mathbb{Z}/p\mathbb{Z}$ are known to ramify only at $p$ or at primes $q$ such that $q \equiv 1 \, \,  {\rm{mod}} \, \,  p$ \cite[theorem 1]{Tra90}. And it is known from \cite[corollary 1.3]{DF90} that there are no Galois extension $E/\mathbb{Q}(T)$ of group $\mathbb{Z}/p\mathbb{Z}$ such that $E/\mathbb{Q}$ is regular and with at least one $\mathbb{Q}$-rational branch point.

\vspace{-0.2mm}

On the other hand, theorem 1 also includes trivial ramification at $\mathcal{P}$, by taking $a_\mathcal{P}$ equal to (a multiple of) the order of the elements of $C_{i_\mathcal{P}}$. In this unramified context, similar more precise conclusions are given in the two papers \cite{DG12} and \cite{DG11} of D\`ebes and Ghazi: they have some additional control on the decomposition groups. As shown in $\S$3.3, it is in fact possible to conjoin their statement and theorem 1 to obtain, for any finite group $G$ which occurs as the Galois group of a Galois extension $E/\mathbb{Q}(T)$ with $E/\mathbb{Q}$ regular, {\it{a general existence result of Galois extensions of $\mathbb{Q}$ of group $G$ with specified local behavior (ramified or unramified)}}. Our theorem \ref{DGL} gives the precise statement.

\vspace{-0.2mm}

Given a finite group $G$, we also use theorem 1 in \S4 to give negative answers to the question of whether a given Galois extension $F/k$ of group $G$ is a specialization of a given Galois extension $E/k(T)$ with group $G$ and such that $E/k$ is regular in the case $k$ is a number field. 

\vspace{-0.2mm}

This has been already investigated in \cite{Deb99a}, \cite{DG12}, \cite{DG11} (and also in \cite{DL13} and \cite{DL12} in the non Galois case) for any base field $k$ and positive answers have been given over various fields such as PAC fields, finite fields or complete valued fields. Recall for example that, given a PAC\footnote{\vspace{-0.5mm} {\it{i.e.}} such that any non-empty geometrically irreducible $k$-variety has a Zariski-dense set of $k$-rational points.} field $k$, any Galois extension $F/k$ of group $G$ is the specialization $E_{t_0}/k$ at $t_0$ of any Galois extension $E/k(T)$ with group $G$ and $E/k$ regular for infinitely many distinct points $t_0 \in \mathbb{P}^1(k)$.

However, in the case $k$ is a number field, the situation is more unclear. If our given extension $E/k(T)$ has genus $\geq 2$, the {\it{Faltings Theorem}} shows that a given finite Galois extension $F/k$ of group $G$ occurs as the specialization $E_{t_0}/k$ at $t_0$ for only finitely many distinct points $t_0 \in \mathbb{P}^1(k)$. Moreover there is at least one extension $F/k$ (in fact infinitely many if $G$ is not trivial) for which there is at least one point $t_0$ while, for another one, there may be no point at all: for instance, the imaginary quadratic extension $\mathbb{Q}(i)/\mathbb{Q}$ is not a specialization of $\mathbb{Q}(T)(\sqrt{T^2+1})/\mathbb{Q}(T)$. 

We offer here a systematic approach to produce Galois extensions $E/k(T)$ of group $G$ with $E/k$ regular which are not {\it{$G$-parametric over $k$}}, {\it{i.e.}} such that the answer is negative for at least one Galois extension $F/k$ of group $G$. We refer to $\S$4.1 for some basics on these extensions.

Let $k$ be a number field, $G$ a finite group and $E_1/k(T)$, $E_2/k(T)$ two Galois extensions of group $G$ such that $E_1/k$ and $E_2/k$ each is regular. We use the previous results to produce some specializations of $E_1/k(T)$ of group $G$ which each is not a specialization of $E_2/k(T)$ (and so $E_2/k(T)$ is not $G$-parametric over $k$). More precisely, we provide two different sufficient conditions which each guarantees such a situation. The first one ({\it{Branch Point Hypothesis}}) involves the branch point arithmetic while the second one ({\it{Inertia Hypothesis}}) is a more geometric condition on the inertia of the two extensions $E_1/k(T)$ and $E_2/k(T)$. Theorem \ref{methode} gives the precise statement; it is the aim of $\S$4.2.

These two criteria allow us to give many new examples of non parametric extensions over number fields. We give here (\S4.3)

\noindent
- a result on four branch point Galois extensions (corollary \ref{4 pts}); examples with $G=\mathbb{Z}/2\mathbb{Z}$ and $G= A_5$ are then given (corollaries \ref{Z/2Z}-6),

\noindent
- for each integer $n \geq 3$, a practical result with $G=S_n$ (corollary \ref{Sn gen}).

\noindent
Many other examples are given in \cite{Leg15}, which is specifically devoted to parametric extensions, and in \cite[chapters 2-3]{Leg13c}.

\vspace{3mm}

{\bf{Acknowledgements.}} I am very grateful to my advisor Pierre D\`ebes for his many re-readings and valuable comments. I also wish to thank Michel Emsalem for helpful discussions regarding the Specialization Inertia Theorem and the anonymous referee for his/her thorough work.

\section{First statements on ramification in specializations}

We first set up the terminology and notation for the basic notions we will use in this paper. $\S$2.1 is concerned with Galois extensions of $k(T)$ while we review and complement in $\S$2.2 some general facts about the ramification in their specializations. Finally $\S$2.3 is devoted to a preliminary ramification criterion at one prime.

\subsection{Basics on Galois extensions of $k(T)$}

Given a field $k$ of characteristic zero, fix an algebraic closure $\overline{k}$ of $k$ and denote its absolute Galois group by ${\rm{G}}_k$. Let $T$ be an indeterminate and $E/k(T)$ a finite Galois extension such that $E/k$ is {\it{regular}} ({\it{i.e.}} $E \cap \overline{k}=k$). Denote its Galois group by $G$. For more on below, we refer to \cite[chapter 3]{Deb09}. 

\subsubsection{Branch points}

Denote the integral closure of $\overline{k}[T]$ (resp. of $\overline{k}[1/T]$) in $E\overline{k}$ by $\overline{B}$ (resp. by $\overline{B^*}$). A point $t_0 \in \overline{k}$ (resp. $\infty$) is said to be {\it{a branch point of $E/k(T)$}} if the prime $(T-t_0) \, \overline{k}[T]$ (resp. $(1/T) \, \overline{k}[1/T]$) ramifies in $\overline{B}$ (resp. in $\overline{B^*}$). The extension $E/k(T)$ has only finitely many branch points, denoted by $t_1,\dots,t_r$.

\subsubsection{Inertia canonical invariant} Fix a {\it{coherent system $\{\zeta_n\}_{n=1}^\infty$ of roots of unity}}, {\it{i.e.}} $\zeta_n$ is a primitive $n$-th root of unity and $\zeta_{nm}^n=\zeta_m$ for any positive integers $n$ and $m$.

To each $t_i$ can be associated a conjugacy class $C_i$ of $G$, called the {\it{inertia canonical conjugacy class (associated with $t_i$)}}, in the following way. The inertia groups of $E\overline{k}/\overline{k}(T)$  at $t_i$ are cyclic conjugate groups of order equal to the ramification index $e_i$. Furthermore each of them has a distinguished generator corresponding to the automorphism $(T-t_i)^{1/e_i} \mapsto \zeta_{e_i} (T-t_i)^{1/e_i}$ of $\overline{k}(((T-t_i)^{1/e_i}))$ (replace $T-t_i$ by $1/T$ if $t_i=\infty$). Then $C_i$ is the conjugacy class of all the distinguished generators of the inertia groups at $t_i$. The unordered $r$-tuple $(C_1,\dots,C_r)$ is called {\it{the inertia canonical invariant of $E/k(T)$}}.

\subsubsection{Specializations}

If $t_0 \in \mathbb{P}^1(k)$ is not a branch point, the residue field of some prime above $t_0$ in $E/k(T)$ is denoted by $E_{t_0}$ and we call the extension $E_{t_0}/k$ {\it{the specialization of $E/k(T)$ at $t_0$}} (this does not depend on the choice of the prime above $t_0$ since the extension $E/k(T)$ is Galois). It is a Galois extension of $k$ of Galois group a subgroup of $G$, namely the decomposition group of the extension $E/k(T)$ at $t_0$.

\vspace{2mm}

In the case $E/k(T)$ is given by a polynomial $P(T,X) \in k[T][X]$, the following lemma is useful:

\begin{lemma} \label{spec}
Let $P(T,X) \in k[T][X]$ be a monic (with respect to $X$) separable polynomial of splitting field $E$ over $k(T)$. Then, for any $t_0 \in k$ such that the specialized polynomial $P(t_0,X)$ is separable over $k$, $t_0$ is not a branch point and the specialization $E_{t_0}/k$ of $E/k(T)$ at $t_0$ is the splitting extension over $k$ of $P(t_0,X)$.
\end{lemma}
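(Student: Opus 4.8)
The plan is to work with the roots of $P$ and the integral closure of $k[T]$ in $E$, using the separability of $P(t_0,X)$ to rigidify the reduction. Write $\alpha_1,\dots,\alpha_n$ for the roots of $P(T,X)$ in $E$, so that $E=k(T)(\alpha_1,\dots,\alpha_n)$ and, $P$ being monic in $X$, each $\alpha_j$ is integral over $k[T]$ and hence lies in the integral closure $B$ of $k[T]$ in $E$. Since $P$ is separable over $k(T)$ one has $P(T,X)=\prod_{j=1}^n(X-\alpha_j)$ in $E[X]$. First I would fix a prime $\mathfrak{P}$ of $B$ above $(T-t_0)k[T]$; by definition $E_{t_0}=B/\mathfrak{P}$, and reduction modulo $\mathfrak{P}$ sends $T$ to $t_0$ and each $\alpha_j$ to some $\overline{\alpha}_j$, so that $P(t_0,X)=\prod_{j}(X-\overline{\alpha}_j)$ in $E_{t_0}[X]$. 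The crucial observation is that separability of $P(t_0,X)$ forces the $\overline{\alpha}_j$ to be pairwise distinct; in particular $P(t_0,X)$ splits in $E_{t_0}$ and its splitting field $L:=k(\overline{\alpha}_1,\dots,\overline{\alpha}_n)$ over $k$ is contained in $E_{t_0}$.

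For the assertion that $t_0$ is not a branch point, I would show that $(T-t_0)$ is unramified in $\overline{B}$. Let $\Delta=\mathrm{disc}_X(P)\in k[T]$; it is nonzero and $\Delta(t_0)\neq 0$ exactly because $P(t_0,X)$ is separable. For each $j$ the minimal polynomial $m_j$ of $\alpha_j$ over $\overline{k}(T)$ divides $P$ and, by Gauss's lemma, is monic in $\overline{k}[T][X]$; hence $\mathrm{disc}(m_j)$ divides $\Delta$ in $\overline{k}[T]$, so $\mathrm{disc}(m_j)(t_0)\neq 0$. The standard discriminant criterion then shows that $(T-t_0)$ is unramified in $\overline{k}(T)(\alpha_j)$ for every $j$. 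Since $E\overline{k}$ is the compositum of the fields $\overline{k}(T)(\alpha_j)$ and a compositum of extensions unramified at a given prime is again unramified there, $(T-t_0)$ is unramified in $\overline{B}$, i.e. $t_0$ is not a branch point.

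It remains to prove the reverse inclusion $E_{t_0}\subseteq L$, which is where separability does the real work. Let $D_{\mathfrak{P}}\subseteq G=\Gal(E/k(T))$ be the decomposition group at $\mathfrak{P}$; in characteristic zero the natural map $D_{\mathfrak{P}}\to \Gal(E_{t_0}/k)$ is surjective and is compatible with the $G$-action on the roots, in the sense that $\overline{\sigma}(\overline{\alpha}_j)=\overline{\sigma(\alpha_j)}$. Given $\overline{\tau}\in\Gal(E_{t_0}/L)$, I would lift it to some $\sigma\in D_{\mathfrak{P}}$. Then $\overline{\sigma}$ fixes each $\overline{\alpha}_j$, so $\sigma(\alpha_j)\equiv\alpha_j \pmod{\mathfrak{P}}$ for all $j$; but $\sigma$ merely permutes the $\alpha_j$, and since the $\overline{\alpha}_j$ are pairwise distinct this congruence forces $\sigma(\alpha_j)=\alpha_j$ for every $j$. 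As the $\alpha_j$ generate $E$, we get $\sigma=1$ in $G$ and hence $\overline{\tau}=1$. Thus $\Gal(E_{t_0}/L)$ is trivial and $E_{t_0}=L$ is the splitting field of $P(t_0,X)$ over $k$. The main obstacle is the bookkeeping around the integral closure $B$ and the decomposition group: one must check that the reduction map is $G$-equivariant and that the only genuine input needed about $B$ is that the $\alpha_j$ lie in it and generate $E$. Everything else is forced by the separability of $P(t_0,X)$, which simultaneously guarantees $\Delta(t_0)\neq 0$ and makes the reduced roots distinct.
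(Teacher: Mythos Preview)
The paper states this lemma without proof, treating it as standard background. Your argument is correct and is the natural way to fill in the details: reduce the roots modulo a prime $\mathfrak{P}$ above $T-t_0$, use separability of $P(t_0,X)$ to ensure the reduced roots stay distinct, and then exploit the decomposition group to identify $E_{t_0}$ with the splitting field of $P(t_0,X)$. Two small remarks: the surjectivity of $D_{\mathfrak{P}}\to\Gal(E_{t_0}/k)$ holds in general for Galois extensions of Dedekind domains (characteristic zero is not needed there; what you use later is that the reduced roots are distinct, which is exactly your separability hypothesis), and in the branch-point step the divisibility $\mathrm{disc}(m_j)\mid\Delta$ in $\overline{k}[T]$ follows cleanly from the factorization identity $\mathrm{disc}(P)=\pm\,\mathrm{disc}(m_j)\,\mathrm{disc}(Q)\,\mathrm{Res}(m_j,Q)^2$ once you write $P=m_j\cdot Q$ with $Q$ monic in $\overline{k}[T][X]$.
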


\subsection{Conditions on ramification in specializations}

The aim of this subsection is the ``Specialization Inertia Theorem" ($\S$2.2.3) which is a slightly more general version of a Beckmann's result \cite[proposition 4.2]{Bec91}. {\hbox{We before review and complement in $\S$2.2.1-2 some background.}}

Let $A$ be a Dedekind domain of characteristic zero, $k$ its quotient field and $\mathcal{P}$ a (non-zero) prime of $A$. Denote the valuation of $k$ corresponding to $\mathcal{P}$ by $v_\mathcal{P}$.

\subsubsection{Meeting} Throughout this subsubsection, we will identify $\mathbb{P}^1(k)$ with $k \cup \{\infty\}$ and set

\noindent
- $1/\infty = 0$,

\noindent
- $1 / 0 = \infty$,

\noindent
- $v_\mathcal{P}(\infty) = -\infty$,

\noindent
- $v_\mathcal{P}(0) = \infty$.

\vspace{2.5mm}

Now recall the following definition:

\begin{definition} \label{rencontre} 
(1) Let $F/k$ be a finite extension, $A_F$ the integral closure of $A$ in $F$, $\mathcal{P}_F$ a (non-zero) prime of $A_F$ and $t_0$, $t_1 \in \mathbb{P}^1(F)$. We say that {\it{$t_0$ and $t_1$ meet modulo $\mathcal{P}_F$}} if either one of the following two conditions holds: 

\vspace{0.5mm}

(a) $v_{\mathcal{P}_F}(t_0) \geq 0$, $v_{\mathcal{P}_F}(t_1) \geq 0$ and $v_{\mathcal{P}_F}(t_0-t_1) > 0$,

\vspace{0.5mm}

(b) $v_{\mathcal{P}_F}(t_0) \leq 0$, $v_{\mathcal{P}_F}(t_1) \leq 0$ and $v_{\mathcal{P}_F}((1/t_0) - (1/t_1)) > 0$.

\vspace{1mm}

\noindent
(2) Given $t_0$, $t_1$ $\in \mathbb{P}^1(\overline{k})$, we say that {\it{$t_0$ and $t_1$ meet modulo $\mathcal{P}$}} if there exists some finite extension $F/k$ satisfying the following two conditions:

\vspace{0.5mm}

(a) $t_0$, $t_1$ $\in \mathbb{P}^1(F)$,

\vspace{0.5mm}

(b) $t_0$ and $t_1$ meet modulo some prime of  $F$ lying over $\mathcal{P}$.
\end{definition}

\begin{remark} \label{3.2}
(1) Part (2) of definition \ref{rencontre} does not depend on the choice of the finite extension $F/k$ such that $t_0$, $t_1$ $\in \mathbb{P}^1(F)$.

\vspace{0.5mm}

\noindent
(2) If $t_0 \in \mathbb{P}^1(k)$ meets $t_1$ modulo $\mathcal{P}$, then $t_0$ meets each $k$-conjugate of $t_1$ modulo $\mathcal{P}$. 
\end{remark}

Let $T$ be an indeterminate. Throughout this paper, the irreducible polynomial over $k$ of any point $t_1 \in \mathbb{P}^1(\overline{k})$ will be denoted by $m_{t_1}(T)$ (set $m_{t_1}(T)=1$ if $t_1 = \infty$). Denote its constant coefficient by $a_{t_1}$. Then the irreducible polynomial of $1/t_1$ over $k$ is 

\vspace{0.25mm}

\noindent
- $m_{1/t_1}(T)=(1/a_{t_1})\, T^{{\rm{deg}}(m_{t_1}(T))} \, m_{t_1}(1/T)$ if $t_1 \in \overline{k} \smallsetminus \{0\}$,

\vspace{0.5mm}

\noindent
- $m_{1/t_1}(T)=1$ if $t_1=0$,

\vspace{0.5mm}

\noindent
- $m_{1/t_1}(T)=T$ if $t_1 = \infty$.

\vspace{2.5mm}

Fix $t_1 \in \mathbb{P}^1(\overline{k})$. Throughout $\S$2.2.1, we will assume that $v_\mathcal{P}(a_{t_1})=0$ if $t_1 \not=0$ to make the intersection multiplicity well-defined in definition \ref{**} below. Let $t_0 \in \mathbb{P}^1(k)$.

\begin{definition} \label{**}
{\it{The intersection multiplicity $I_{\mathcal{P}}(t_0,t_1)$ of $t_0$ and $t_1$ at $\mathcal{P}$}} is 
$I_\mathcal{P}(t_0,t_1)= \left \{ \begin{array} {ccc}
          v_{\mathcal{P}}(m_{t_1}(t_0)) & {\rm{if}} & v_\mathcal{P}(t_0) \geq 0, \\
          v_{\mathcal{P}}(m_{1/t_1}(1/t_0)) & {\rm{if}} &  v_\mathcal{P}(t_0) \leq 0. \\   
          \end{array} \right.$
\end{definition}

Lemma \ref{lemme} below will be used on several occasions in this paper:

\begin{lemma} \label{lemme} 
{\rm{(1)}} If $I_{\mathcal{P}}(t_0,t_1) >0$, then $t_0$ and $t_1$ meet modulo $\mathcal{P}$.

\vspace{0.5mm}

\noindent
{\rm{(2)}} The converse is true if $m_{t_1}(T) \in A_\mathcal{P}[T]$.
\end{lemma}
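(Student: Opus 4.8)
The plan is to reduce everything to the behaviour of a single valuation on a common splitting field. First I would fix a finite extension $F/k$ containing $t_1$ together with all its $k$-conjugates $t_1^{(1)}=t_1,\dots,t_1^{(d)}$ (the splitting field of $m_{t_1}$, with $d=\deg m_{t_1}$), so that $m_{t_1}(T)=\prod_{j}(T-t_1^{(j)})$ is monic and, by the explicit formula for $m_{1/t_1}$ recalled above, $m_{1/t_1}(T)=\prod_j(T-1/t_1^{(j)})$ whenever $t_1\notin\{0,\infty\}$. Since $t_0\in k\subseteq F$, fix a prime $\mathcal{P}_F$ of the integral closure of $A$ in $F$ lying over $\mathcal{P}$ and set $w=v_{\mathcal{P}_F}$; then for every $x\in k$ the values $w(x)$ and $v_\mathcal{P}(x)$ have the same sign. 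The central device is that, because $w$ is a valuation, $w(m_{t_1}(t_0))=\sum_j w(t_0-t_1^{(j)})$ and similarly $w(m_{1/t_1}(1/t_0))=\sum_j w((1/t_0)-(1/t_1^{(j)}))$, so the assertions about $I_\mathcal{P}(t_0,t_1)$ become assertions about these sums.

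For part (1) I would split according to the sign of $v_\mathcal{P}(t_0)$. If $v_\mathcal{P}(t_0)\geq 0$, then $I_\mathcal{P}(t_0,t_1)=v_\mathcal{P}(m_{t_1}(t_0))>0$ forces $\sum_j w(t_0-t_1^{(j)})>0$, whence $w(t_0-t_1^{(j_0)})>0$ for some index $j_0$. As $w(t_0)\geq 0$, the identity $t_1^{(j_0)}=t_0-(t_0-t_1^{(j_0)})$ gives $w(t_1^{(j_0)})\geq 0$, so condition (a) of Definition \ref{rencontre} holds for the pair $(t_0,t_1^{(j_0)})$ at $\mathcal{P}_F$; thus $t_0$ meets the conjugate $t_1^{(j_0)}$ modulo $\mathcal{P}$, and Remark \ref{3.2}(2) (applied with $t_1^{(j_0)}$, of which $t_1$ is a conjugate) upgrades this to $t_0$ meeting $t_1$. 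The case $v_\mathcal{P}(t_0)\leq 0$ is identical after the substitution $t\mapsto 1/t$, working with $m_{1/t_1}$ and landing in condition (b). The boundary cases $t_1\in\{0,\infty\}$ or $t_0\in\{0,\infty\}$ are checked directly against the conventions $1/\infty=0$ and $v_\mathcal{P}(0)=\infty$.

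For the converse in part (2) the new input is the hypothesis $m_{t_1}\in A_\mathcal{P}[T]$, which guarantees that every root $t_1^{(j)}$ is integral over $A_\mathcal{P}$, hence $w(t_1^{(j)})\geq 0$. Suppose $t_0$ and $t_1$ meet modulo $\mathcal{P}$, witnessed (after enlarging $F$ to contain all conjugates) by a prime $\mathcal{P}_F$ and valuation $w$. In case (a) one has $w(t_0)\geq 0$, so $v_\mathcal{P}(t_0)\geq 0$ and $I_\mathcal{P}(t_0,t_1)=v_\mathcal{P}(m_{t_1}(t_0))$; one factor, namely $t_0-t_1$, satisfies $w(t_0-t_1)>0$, while every other factor satisfies $w(t_0-t_1^{(j)})\geq \min(w(t_0),w(t_1^{(j)}))\geq 0$, so the sum is strictly positive and $I_\mathcal{P}(t_0,t_1)>0$. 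In case (b), integrality gives $w(t_1)\geq 0$ together with $w(t_1)\leq 0$, hence $w(t_1)=0$; feeding this into $w((1/t_0)-(1/t_1))>0$ forces $w(t_0)=0$, i.e. $v_\mathcal{P}(t_0)=0$, and then $w(t_0-t_1)>0$ as well, so we fall back into case (a), the two defining formulas for $I_\mathcal{P}$ agreeing on the overlap $v_\mathcal{P}(t_0)=0$ thanks to the normalization $v_\mathcal{P}(a_{t_1})=0$.

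The main obstacle is precisely this converse: without $m_{t_1}\in A_\mathcal{P}[T]$ a conjugate root could have negative $w$-value, and the negative contribution of its factor could cancel the positive contribution coming from the meeting, so that $I_\mathcal{P}(t_0,t_1)$ would fail to be positive. The integrality hypothesis is exactly what excludes this, and the only genuinely delicate bookkeeping is keeping track of the two sign regimes for $v_\mathcal{P}(t_0)$, the boundary conventions at $0$ and $\infty$, and the compatibility of the two defining formulas on their common overlap.
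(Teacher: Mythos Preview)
Your proof is correct and follows essentially the same route as the paper: fix a prime over $\mathcal{P}$ in the splitting field, factor $m_{t_1}(t_0)$ and $m_{1/t_1}(1/t_0)$ into linear terms, and for the converse use integrality of the roots to ensure all factors have nonnegative valuation. The one minor difference is in case~(b) of part~(2): the paper argues symmetrically with $m_{1/t_1}$ (which also lies in $A_\mathcal{P}[T]$ under the standing hypothesis $v_\mathcal{P}(a_{t_1})=0$), whereas you observe that integrality forces $w(t_1)=0$ and then $w(t_0)=0$, collapsing case~(b) back into case~(a); both are equally direct.
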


\begin{proof}
First of all, we note the following simple statement which will be used on several occasions in this paper:

\vspace{1.5mm}

\noindent
($*$) {\it{Let $m(T) \in A_\mathcal{P}[T]$ be a non constant monic polynomial, $L/k$ any finite extension, $\mathcal{Q}$ a prime of $L$ above $\mathcal{P}$ and $t \in L$ such that $v_\mathcal{Q}(m(t)) \geq 0$ (in particular if $t$ is a root of $m(T)$). Then $v_\mathcal{Q}(t) \geq 0$.}}

\vspace{1.5mm}

\noindent
Indeed assume that $v_\mathcal{Q}(t) <0$. Set $m(T)=a_0 + a_1 T + \dots + a_{n-1} T^{n-1} + T^n$. Since $m(T) \in A_\mathcal{P}[T]$, one has $v_\mathcal{Q}(a_j t^j) > v_\mathcal{Q}(t^n)$ for each index $j \in \{0,\dots,n-1\}$. Hence $v_\mathcal{Q}(m(t)) = v_\mathcal{Q}(t^n) <0$; a contradiction.

\vspace{2mm}

To prove lemma \ref{lemme}, set $m_{t_1}(T)= \prod_{i=1}^n (T-t_i)$ (if $t_1 \not = \infty$) and fix a prime $\mathcal{Q}$ of $k(t_1,\dots,t_n)$ above $\mathcal{P}$. We successively prove conclusions (1) and (2).

\vspace{2mm}

\noindent
(1) First assume that $v_{\mathcal{P}}(t_0) \geq 0$. Then one has $v_{\mathcal{P}}(m_{t_1}(t_0)) > 0$ from our assumption $I_\mathcal{P}(t_0,t_1) > 0$ and $t_1 \not= \infty$ (otherwise $1=m_{t_1}(t_0) \in \mathcal{P}A_\mathcal{P}$). Hence one has $\sum_{i=1}^n v_{\mathcal{Q}}(t_0-t_i) >0$. Consequently there exists some index $i_0 \in \{1,\dots,n\}$ such that $v_{\mathcal{Q}}(t_0-t_{i_0}) >0$. Since $v_{\mathcal{Q}}(t_0) \geq 0$, one has $v_\mathcal{Q}(t_{i_0}) \geq 0$. Hence $t_0$ and $t_{i_0}$ meet modulo $\mathcal{P}$ and the conclusion follows from part (2) of remark \ref{3.2}.

Now assume that $v_{\mathcal{P}}(t_0) \leq 0$. Then one has $v_{\mathcal{P}}(m_{1/t_1}(1/t_0)) > 0$ and $t_1 \not=0$ (otherwise $1=m_{1/t_1}(1/t_0) \in \mathcal{P}A_\mathcal{P}$). If $t_1=\infty$, then $t_0$ and $t_1$ meet modulo $\mathcal{P}$. If $t_1 \not=\infty$, one has $m_{1/t_1}(T) = \prod_{i=1}^n (T-(1/t_i))$. Hence $\sum_{i=1}^n v_{\mathcal{Q}}((1/t_0)- (1/t_i)) >0$. Consequently there exists some index $i_0 \in \{1,\dots,n\}$ such that $v_{\mathcal{Q}}((1/t_0)-(1/t_{i_0})) >0$. As before, $t_0$ and $t_{i_0}$ meet modulo $\mathcal{P}$ and one concludes from part (2) of remark \ref{3.2}.

\vspace{2mm}

\noindent
(2) Now assume that $t_0$ and $t_1$ meet modulo $\mathcal{P}$ and $m_{t_1}(T) \in A_\mathcal{P}[T]$ \footnote{and so $m_{1/t_1}(T)$ does too due to our assumption stated before definition \ref{**}.}. It is easily checked that conclusion (2) holds if $t_1 \in \{0,\infty\}$, so assume that $t_1 \not \in \{0,\infty\}$. 

First consider the case $v_\mathcal{Q}(t_0) \geq 0$, $v_\mathcal{Q}(t_1) \geq 0$ and $v_\mathcal{Q}(t_0-t_1) > 0$. Given an index $i \in \{1,\dots,n\}$, statement ($*$) (applied to the polynomial $m_{t_1}(T)$) shows that one has $v_\mathcal{Q}(t_i) \geq 0$, and then $v_\mathcal{Q}(t_0 - t_i) \geq 0$. Hence $v_\mathcal{Q}(m_{t_1}(t_0)) \geq v_\mathcal{Q}(t_0 - t_1) >0$, {\it{i.e.}} $I_\mathcal{P}(t_0,t_1) >0$.

Now consider the case $v_\mathcal{Q}(t_0) \leq 0$, $v_\mathcal{Q}(t_1) \leq 0$ and $v_\mathcal{Q}((1/t_0) - (1/t_1)) > 0$. Given an index $i \in \{1,\dots,n\}$, statement ($*$) (applied this time to the polynomial $m_{1/t_1}(T)$) shows that one has $v_\mathcal{Q}(1/t_i) \geq 0$, and then $v_\mathcal{Q}((1/t_0) - (1/t_i)) \geq 0$. Hence $v_\mathcal{Q}(m_{1/t_1}(1/t_0)) \geq v_\mathcal{Q}((1/t_0) - (1/t_1)) >0$, {\it{i.e.}} $I_\mathcal{P}(t_0,t_1) >0$.
\end{proof}

\subsubsection{Good primes} Continue with the same notation as before. Let $G$ be a finite group and $E/k(T)$ a Galois extension of group $G$ with $E/k$ regular. Denote its branch point set by $\{t_1,\dots,t_r\}$.

\begin{definition} \label{bon premier}
We say that $\mathcal{P}$ is {\it{a bad prime for $E/k(T)$}} if at least one of the following four conditions holds:

\vspace{0.5mm}

\noindent
(1) $|G| \in \mathcal{P}$,

\vspace{0.5mm}

\noindent
(2) two distinct branch points meet modulo $\mathcal{P}$,

\vspace{0.5mm}

\noindent
(3) $E/k(T)$ has {\it{vertical ramification}} at $\mathcal{P}$, {\it{i.e.}} the prime $\mathcal{P} A[T]$ of $A[T]$ ramifies in the integral closure of $A[T]$ in $E
$ \footnote{If $G$ has trivial center, this condition may be removed \cite[proposition 2.3]{Bec91}.},

\vspace{0.5mm}

\noindent
(4) $\mathcal{P}$ ramifies in $k(t_1,\dots,t_r)/k$.

\vspace{0.5mm}

\noindent
Otherwise $\mathcal{P}$ is called {\it{a good prime for $E/k(T)$}}.
\end{definition}

\begin{remark} \label{condd}
(1) There are only finitely many bad primes for $E/k(T)$.

\vspace{1mm}

\noindent
(2) Condition (4) above does not appear in \cite{Bec91} but seems to be missing for the proof of proposition 4.2 of this paper to work. Indeed, although it is stated and used at the beginning of the proof there, it seems unclear that any prime of $A$ which ramifies in the extension $k(t_1,\dots,t_r)/k$ should be a bad prime for $E/k(T)$. 

In fact, if $\mathcal{P}$ satisfies condition (4) and this extra condition:

\vspace{1mm}

\noindent
(4') {\it{$t_i$ or $1/t_i$ is integral over $A_\mathcal{P}$ (i.e. $m_{t_i}(T)$ or $m_{1/t_i}(T)$ has coefficients in $A_\mathcal{P}$) for each non $k$-rational branch point $t_i$,}}

\vspace{1mm}

\noindent
then $\mathcal{P}$ satisfies condition (2) of definition \ref{bon premier} \footnote{and then $\mathcal{P}$ is a bad prime in the sense of Beckmann.}.

Indeed, if $\mathcal{P}$ ramifies in $k(t_1,\dots,t_r)/k$, then $\mathcal{P}$ does in some $k(t_i)/k$ and so $t_i$ is not $k$-rational. So assume from the extra condition (4') that $t_i$ is integral over $A_\mathcal{P}$ (the other case for which $1/t_i$ is integral over $A_\mathcal{P}$ is quite similar). Hence $\mathcal{P}A_\mathcal{P}$ contains the discriminant of the integral $k$-basis $\{1,t_i,\dots,t_i^{[k(t_i)\, : \,k]-1}\}$ of $k(t_i)$, {\it{i.e.}} the discriminant of $m_{t_i}(T)$. 

This sole condition shows that condition (2) of definition \ref{bon premier} holds. Indeed, first remark that $t_i$ is not $k$-rational (otherwise $1 \in \mathcal{P}A_\mathcal{P}$). Let $\mathcal{Q}$ be a prime of the splitting field over $k$ of $m_{t_i}(T)=\prod_j(T-t_j)$ above $\mathcal{P}$. As $\prod_{j \not=j'} (t_j-t_{j'}) \in \mathcal{P} A_\mathcal{P}$, there are two indices $j \not=j'$ such that $v_\mathcal{Q}(t_j-t_{j'}) >0$. If $v_\mathcal{Q}(t_j) \geq 0$, then $v_\mathcal{Q}(t_{j'}) \geq 0$ and $t_j$ and $t_{j'}$ meet modulo $\mathcal{P}$. If $v_\mathcal{Q}(t_j) < 0$, then $v_\mathcal{Q}(t_{j'}) < 0$ and $v_\mathcal{Q}((1/t_j)-(1/t_{j'})) = v_\mathcal{Q}(t_j-t_{j'}) - v_\mathcal{Q}(t_j) - v_\mathcal{Q}(t_{j'}) >0$. Hence $t_j$ and $t_{j'}$ meet modulo $\mathcal{P}$.
\end{remark}

In particular, we obtain lemma \ref{derivee} below:

\begin{lemma} \label{derivee}
Let $i \in \{1,\dots,r\}$ and $t_0 \in A_\mathcal{P}$. Assume that $m_{t_i}(T) \in A_{\mathcal{P}}[T]$, $v_{\mathcal{P}}(m_{t_i}(t_0)) > 0$ and $v_{\mathcal{P}}(m'_{t_i}(t_0)) > 0$. Then $\mathcal{P}$ is a bad prime for $E/k(T)$.
\end{lemma}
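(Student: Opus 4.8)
The plan is to show that $\mathcal{P}$ satisfies condition (2) of definition \ref{bon premier}, i.e. that two distinct branch points meet modulo $\mathcal{P}$. As in remark \ref{condd}, it suffices for this to prove that $\mathcal{P}A_\mathcal{P}$ contains the discriminant $\prod_{j\neq j'}(t_j-t_{j'})$ of $m_{t_i}(T)$, where the $t_j$ range over the (distinct) conjugates of $t_i$: these conjugates are again branch points, the branch point set being stable under ${\rm{G}}_k$, and they are pairwise distinct since $m_{t_i}(T)$ is separable in characteristic zero. Note first that the hypotheses force $t_i$ to be a finite, non-$k$-rational branch point: if $t_i=\infty$ then $m_{t_i}(T)=1$ and $v_\mathcal{P}(m_{t_i}(t_0))=0$, while if $t_i\in k$ then $m_{t_i}(T)$ is linear and $m'_{t_i}(T)$ is a nonzero constant, so $v_\mathcal{P}(m'_{t_i}(t_0))=0$; in both cases a hypothesis is violated. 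Hence $n:=\deg(m_{t_i}(T))\geq 2$.

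The main step is a reduction modulo $\mathcal{P}$. Since $m_{t_i}(T)\in A_\mathcal{P}[T]$ is monic and $t_0\in A_\mathcal{P}$, I would pass to the residue field $\kappa=A_\mathcal{P}/\mathcal{P}A_\mathcal{P}$ and consider the reduction $\overline{m_{t_i}}(T)\in\kappa[T]$, a monic polynomial of degree $n$ whose formal derivative is the reduction of $m'_{t_i}(T)$. The assumption $v_\mathcal{P}(m_{t_i}(t_0))>0$ says that the image $\overline{t_0}$ of $t_0$ is a root of $\overline{m_{t_i}}(T)$, and $v_\mathcal{P}(m'_{t_i}(t_0))>0$ says that $\overline{t_0}$ is also a root of $\overline{m_{t_i}}{}'(T)$. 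A common root of a polynomial and its derivative is a multiple root, so $\overline{m_{t_i}}(T)$ is not separable over $\kappa$; equivalently $v_\mathcal{P}({\rm{disc}}(m_{t_i}(T)))>0$, i.e. $\prod_{j\neq j'}(t_j-t_{j'})\in\mathcal{P}A_\mathcal{P}$.

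To finish I would reproduce the discriminant argument of remark \ref{condd}. Fix a prime $\mathcal{Q}$ above $\mathcal{P}$ in the splitting field of $m_{t_i}(T)$. From $\prod_{j\neq j'}(t_j-t_{j'})\in\mathcal{P}A_\mathcal{P}$ there are two indices $j\neq j'$ with $v_\mathcal{Q}(t_j-t_{j'})>0$. By statement ($*$) in the proof of lemma \ref{lemme} applied to $m_{t_i}(T)\in A_\mathcal{P}[T]$, every root satisfies $v_\mathcal{Q}(t_j)\geq 0$, so part (a) of definition \ref{rencontre}(1) holds and $t_j$ and $t_{j'}$ meet modulo $\mathcal{P}$. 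These being two distinct branch points, condition (2) of definition \ref{bon premier} is satisfied and $\mathcal{P}$ is a bad prime for $E/k(T)$. I expect no serious obstacle here; the only points requiring care are the ${\rm{G}}_k$-stability of the branch locus (so that the conjugates of $t_i$ are themselves branch points) and the separability of $m_{t_i}(T)$ (so that the two meeting branch points are genuinely distinct), both of which are automatic in the present setting.
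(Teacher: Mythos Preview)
Your proof is correct and follows exactly the route the paper intends: the paper presents lemma \ref{derivee} as an immediate consequence of remark \ref{condd} (``In particular, we obtain lemma \ref{derivee} below''), and you have spelled out precisely the missing link---that the hypotheses force the discriminant of $m_{t_i}(T)$ to lie in $\mathcal{P}A_\mathcal{P}$---before invoking the discriminant argument of remark \ref{condd}. Your use of statement ($*$) to avoid the case split on the sign of $v_\mathcal{Q}(t_j)$ is a legitimate simplification, available here because $m_{t_i}(T)\in A_\mathcal{P}[T]$ is assumed outright (whereas remark \ref{condd} only assumed condition (4'), which allows $m_{1/t_i}(T)\in A_\mathcal{P}[T]$ instead).
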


\subsubsection{Ramification in specializations of $E/k(T)$} Continue with the same notation as before. For each index $i \in \{1,\dots,r\}$, let $g_i$ be the distinguished generator of some inertia group of $E\overline{k}/\overline{k}(T)$ at $t_i$. 

\vspace{3mm}

\noindent
{\bf{Specialization Inertia Theorem.}} {\it{Let $t_0 \in \mathbb{P}^1(k) \smallsetminus \{t_1,\dots,t_r\}$.

\vspace{1mm}

\noindent
{\rm{(1)}} If $\mathcal{P}$ ramifies in $E_{t_0}/k$, then $E/k(T)$ has vertical ramification at $\mathcal{P}$ or $t_0$ meets some branch point modulo $\mathcal{P}$.

\vspace{1mm}

\noindent
{\rm{(2)}} Fix an index $j \in \{1,\dots,r\}$ such that $t_0$ and $t_j$ meet modulo $\mathcal{P}$. Assume that the following two conditions hold:

\vspace{0.5mm}

{\rm{(a)}} $\mathcal{P}$ is a good prime for $E/k(T)$,

\vspace{0.5mm}

{\rm{(b)}} $t_j$ and $1/t_j$ are integral over $A_\mathcal{P}$.

\vspace{0.25mm}

\noindent
Then the inertia group of $E_{t_0}/k$ at $\mathcal{P}$ is (conjugate in $G$ to) $\langle g_j^{I_\mathcal{P}(t_0,t_j)} \rangle$.}}

\vspace{3mm}

In the case $t_j \not \in \{0,\infty\}$, condition (b) in part (2) above is equivalent to $t_j$ being a unit in $\overline{k}$ with respect to any prolongation of $v_\mathcal{P}$ to $\overline{k}$ (statement ($*$)). It will be used on several occasions in this paper; we will say for short that ``$\mathcal{P}$ unitizes $t_j$".

As already alluded to, this result is a version of \cite[proposition 4.2]{Bec91} with less restrictive hypotheses. Part (1) may be obtained from the algebraic cover theory of Grothendieck while part (2) follows from the original proof of \cite[proposition 4.2]{Bec91} and some work of Flon \cite[theorem 1.3.3]{Flo02} (and the necessary adjustment alluded to in part (2) of remark \ref{condd}). ``A unified and detailed proof" is given in \cite{Leg13c}.

\subsection{Ramification criteria at one prime} Our next goal (achieved with theorem \ref{Beckmann4}) is to show that, for some good choice of the specialization point $t_0$, ramification can be prescribed at finitely many primes in the specialization $E_{t_0}/k$ within the Specialization Inertia Theorem limitations. We start by the special but useful case where there is a single prime and the requirement is that it does ramify (corollary \ref{transition2}).

Continue with the same notation as before. Let $x_\mathcal{P}$ be a generator of the maximal ideal $\mathcal{P} A_\mathcal{P}$ of $A_\mathcal{P}$. Assume in proposition \ref{transition} below that $\mathcal{P}$ is a good prime for $E/k(T)$ unitizing each branch point.

\begin{proposition} \label{transition}
Let $t_0 \in \mathbb{P}^1(k) \smallsetminus \{t_1,\dots,t_r\}$ such that $v_\mathcal{P}(t_0) \geq 0$ (resp. $v_\mathcal{P}(t_0) \leq 0$) and neither $t_0$ nor $t_0 + x_\mathcal{P}$ is in $\{t_1,\dots,t_r\}$ (resp. neither $t_0$ nor $t_0/(1+x_\mathcal{P} \, t_0)$ \footnote{Replace $t_0/(1+x_\mathcal{P} \, t_0)$ by $1/x_\mathcal{P}$ if $t_0=\infty$.} is in $\{t_1,\dots,t_r\}$). Then the following two conditions are equivalent:

\vspace{0.5mm}

\noindent
{\rm{(1)}} $t_0$ meets some branch point modulo $\mathcal{P}$ (in both cases),

\vspace{0.5mm}

\noindent
{\rm{(2)}} $\mathcal{P}$ ramifies in $E_{t_0}/k$ or in $E_{t_0 + x_\mathcal{P}}/k$ (resp. in $E_{t_0}/k$ or in $E_{t_0/(1+x_\mathcal{P} \, t_0)}/k$).
\end{proposition}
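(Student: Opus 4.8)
The plan is to prove the two implications separately, the main tool being the Specialization Inertia Theorem read together with the good-prime and unitizing hypotheses. Throughout, when $t_0$ meets a branch point I write $t_j$ for one such point; note that, since $\mathcal{P}$ is a good prime, condition (2) of definition \ref{bon premier} together with the fact that meeting modulo $\mathcal{P}$ only depends on the common reduction of the points (definition \ref{rencontre}) prevents $t_0$ from meeting two distinct branch points, so $t_j$ is in fact unique. I treat the case $v_\mathcal{P}(t_0)\geq 0$ in detail; the case $v_\mathcal{P}(t_0)\leq 0$ follows by the substitution $T\mapsto 1/T$, which replaces $t_0$ by $1/t_0$, the branch point $t_j$ by $1/t_j$, its minimal polynomial by $m_{1/t_j}$, and the shift $t_0\mapsto t_0+x_\mathcal{P}$ by $1/t_0\mapsto (1/t_0)+x_\mathcal{P}$, i.e. $t_0\mapsto t_0/(1+x_\mathcal{P}\,t_0)$. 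The unitizing hypothesis guarantees $m_{1/t_j}(T)\in A_\mathcal{P}[T]$ as well, so the same computation applies verbatim.

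For $(2)\Rightarrow(1)$, suppose first $\mathcal{P}$ ramifies in $E_{t_0}/k$. Since $\mathcal{P}$ is a good prime it has no vertical ramification, so part (1) of the Specialization Inertia Theorem forces $t_0$ to meet some branch point modulo $\mathcal{P}$, which is exactly (1). If instead $\mathcal{P}$ ramifies in $E_{t_0+x_\mathcal{P}}/k$, the same theorem produces a branch point $t_j$ met by $t_0+x_\mathcal{P}$, i.e. $v_\mathcal{P}(m_{t_j}(t_0+x_\mathcal{P}))>0$. Writing $m_{t_j}(t_0+x_\mathcal{P})=m_{t_j}(t_0)+m'_{t_j}(t_0)\,x_\mathcal{P}+R$ with $v_\mathcal{P}(R)\geq 2$, and noting that $m'_{t_j}(t_0)\in A_\mathcal{P}$ (as $m_{t_j}(T)\in A_\mathcal{P}[T]$ and $t_0\in A_\mathcal{P}$) gives $v_\mathcal{P}(m'_{t_j}(t_0)\,x_\mathcal{P})\geq 1$, I deduce $v_\mathcal{P}(m_{t_j}(t_0))>0$, hence $I_\mathcal{P}(t_0,t_j)>0$ and $t_0$ meets $t_j$ by lemma \ref{lemme}.

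The substantive implication is $(1)\Rightarrow(2)$. Let $t_j$ be the branch point met by $t_0$; by hypothesis $m_{t_j}(T)\in A_\mathcal{P}[T]$ and $v_\mathcal{P}(m_{t_j}(t_0))=I_\mathcal{P}(t_0,t_j)>0$, where $t_0\in A_\mathcal{P}$ since $v_\mathcal{P}(t_0)\geq 0$. Because $\mathcal{P}$ is a good prime, lemma \ref{derivee} forbids $v_\mathcal{P}(m'_{t_j}(t_0))>0$, so $m'_{t_j}(t_0)$ is a unit at $\mathcal{P}$. Expanding as above — the binomial coefficients being integers, so that no denominators enter and the remainder $R$ genuinely lies in $\mathcal{P}^2 A_\mathcal{P}$ — the linear term $m'_{t_j}(t_0)\,x_\mathcal{P}$ now has valuation exactly $1$, whence the difference $m_{t_j}(t_0+x_\mathcal{P})-m_{t_j}(t_0)=m'_{t_j}(t_0)\,x_\mathcal{P}+R$ has valuation exactly $1$. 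Consequently $v_\mathcal{P}(m_{t_j}(t_0))$ and $v_\mathcal{P}(m_{t_j}(t_0+x_\mathcal{P}))$ cannot both exceed $1$; as both are positive integers, at least one of them equals $1$. For the corresponding point ($t_0$ or $t_0+x_\mathcal{P}$) the intersection multiplicity with $t_j$ is $1$, so part (2) of the Specialization Inertia Theorem — whose hypotheses (a) and (b) are precisely goodness of $\mathcal{P}$ and integrality of $t_j,1/t_j$ over $A_\mathcal{P}$ — identifies the inertia group at $\mathcal{P}$ as $\langle g_j\rangle$, which is nontrivial since $t_j$ is a genuine branch point (so $g_j\neq 1$). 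Hence $\mathcal{P}$ ramifies in that specialization, giving (2).

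The main obstacle is the exactness of this first-order estimate: everything hinges on $m'_{t_j}(t_0)$ being a unit at $\mathcal{P}$, so that the linear term dominates the quadratic remainder and pins down a valuation equal to $1$. This is exactly where the good-prime hypothesis, through lemma \ref{derivee}, is indispensable; without it the two intersection multiplicities could both be large and both specializations unramified. A secondary point deserving care is that the integrality of $t_j$ and $1/t_j$ over $A_\mathcal{P}$ is used twice — once to keep the Taylor remainder in $\mathcal{P}^2 A_\mathcal{P}$ with no residue-characteristic trouble, and once to license part (2) of the Specialization Inertia Theorem — and that the inverted case really is symmetric under $T\mapsto 1/T$, which is what validates replacing $t_0+x_\mathcal{P}$ by $t_0/(1+x_\mathcal{P}\,t_0)$.
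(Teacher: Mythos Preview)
Your proof is correct and follows essentially the same approach as the paper: both directions are handled via the Specialization Inertia Theorem combined with a first-order Taylor expansion of $m_{t_j}$ around $t_0$, using lemma \ref{derivee} to ensure $m'_{t_j}(t_0)$ is a $\mathcal{P}$-unit. The only cosmetic difference is in the organization of $(1)\Rightarrow(2)$: the paper assumes $\mathcal{P}$ does not ramify in $E_{t_0}/k$, so $v_\mathcal{P}(m_{t_i}(t_0))\geq 2$, and then shows $v_\mathcal{P}(m_{t_i}(t_0+x_\mathcal{P}))=1$; you instead argue symmetrically that the difference has valuation exactly $1$, forcing one of the two valuations to equal $1$ (you might note explicitly that $v_\mathcal{P}(m_{t_j}(t_0+x_\mathcal{P}))\geq 1$ also follows from the same Taylor expansion, so ``both are positive'' is justified).
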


\begin{proof} 
We may assume that $v_\mathcal{P}(t_0) \geq 0$ (the other case for which $v_\mathcal{P}(t_0) \leq 0$ is quite similar). 

First assume that condition (2) holds. From part (1) of the Specialization Inertia Theorem, one may assume that $\mathcal{P}$ ramifies in $E_{t_0 + x_\mathcal{P}}/k$. Hence $t_0 + x_\mathcal{P}$ meets some branch point $t_i$ modulo $\mathcal{P}$. Since $m_{t_i}(T) \in A_\mathcal{P}[T]$, the converse in part (1) of lemma \ref{lemme} holds and $I_{\mathcal{P}}(t_0+x_\mathcal{P},t_i)>0$, {\it{i.e.}} $v_{\mathcal{P}}(m_{t_i}(t_0 + x_\mathcal{P})) >0$. From Taylor's formula, there exists some element $R_\mathcal{P} \in A_\mathcal{P}$ such that 
$$m_{t_i}(t_0) = m_{t_i}(t_0 + x_\mathcal{P}) + x_\mathcal{P} \, R_\mathcal{P}$$
Hence $v_\mathcal{P}(m_{t_i}(t_0)) >0$, {\it{i.e.}} $I_\mathcal{P}(t_0,t_i) >0$. It then remains to apply part (1) of lemma \ref{lemme} to finish the proof of implication (2) $\Rightarrow$ (1).

Now assume that $t_0$ and $t_i$ meet modulo $\mathcal{P}$ (and then $I_\mathcal{P}(t_0,t_i) >0$ from the converse in part (1) of lemma \ref{lemme}). From part (2) of the Specialization Inertia Theorem, $\mathcal{P}$ ramifies in $E_{t_0}/k$ if and only if $I_\mathcal{P}(t_0,t_i)$ is not a multiple of the order of the distinguished generator $g_i$, {\it{i.e.}} if and only if $v_\mathcal{P}(m_{t_i}(t_0))$ is not either. Hence we may assume that $v_\mathcal{P}(m_{t_i}(t_0)) \geq 2$. Taylor's formula yields
$$m_{t_i}(t_0 + x_\mathcal{P}) = m_{t_i}(t_0) + x_\mathcal{P} \, m_{t_i}'(t_0) + x_{\mathcal{P}}^2 \, R_\mathcal{P}$$
with $R_\mathcal{P} \in A_{\mathcal{P}}$. Then $v_{\mathcal{P}}(m_{t_i}(t_0+x_\mathcal{P}))=1$ since $v_{\mathcal{P}}(m_{t_i}(t_0)) \geq 2$, $v_{\mathcal{P}}(x_\mathcal{P} \, m_{t_i}'(t_0))=1$ (lemma \ref{derivee}) and $v_{\mathcal{P}}(x_\mathcal{P}^2 \, R_{\mathcal{P}}) \geq 2$. Hence $\mathcal{P}$ ramifies in $E_{t_0 + x_\mathcal{P}}/k$ and condition (2) holds.
\end{proof}

Now recall the following definition:

\begin{definition} \label{diviseur}
Let $P(T) \in k[T]$ be a non constant polynomial. We say that $\mathcal{P}$ is {\it{a prime divisor of $P(T)$}} if there exists some element $t_0 \in k$ such that $v_{\mathcal{P}}(P(t_0)) >0$.
\end{definition}

\begin{remark} \label{rem div}
Assume that $P(T)$ is in $A_\mathcal{P}[T]$ and $v_\mathcal{P}(P(t_0)) >0$. Fix $a \in \mathcal{P} A_\mathcal{P}$. As noted in the second paragraph of the proof of proposition \ref{transition}, one has $v_{\mathcal{P}}(P(t_0 + a)) >0$. Moreover, if $v_\mathcal{P}(a) > v_\mathcal{P}(P(t_0))$, then $v_\mathcal{P}(P(t_0+a))=v_\mathcal{P}(P(t_0))$.
\end{remark}

Set $m_{\bf{\underline{t}}}(T) = \prod_{i=1}^r m_{t_i}(T)$ and $m_{1/\bf{\underline{t}}}(T) = \prod_{i=1}^r m_{1/t_i}(T)$. Then corollary \ref{transition2} below follows:

\begin{corollary} \label{transition2}
Assume that $\mathcal{P}$ is a good prime for $E/k(T)$ unitizing each branch point. Then the following two conditions are equivalent:

\vspace{0.5mm}

\noindent
{\rm{(1)}} $\mathcal{P}$ ramifies in at least one specialization of $E/k(T)$,

\vspace{0.5mm}

\noindent
{\rm{(2)}} $\mathcal{P}$ is a prime divisor of $m_{\bf{\underline{t}}}(T) \cdot m_{1/\bf{\underline{t}}}(T)$.
\end{corollary}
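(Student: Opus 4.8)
The plan is to prove the two implications separately, after recording one preliminary observation. Since $\mathcal{P}$ unitizes each branch point, each $m_{t_i}(T)$ and each $m_{1/t_i}(T)$ is a monic polynomial with coefficients in $A_\mathcal{P}$, so both $m_{\bf{\underline{t}}}(T)$ and $m_{1/\bf{\underline{t}}}(T)$ lie in $A_\mathcal{P}[T]$, as does their product. Consequently, by statement $(*)$, any element $t_0 \in k$ witnessing $\mathcal{P}$ as a prime divisor of one of these polynomials automatically satisfies $v_\mathcal{P}(t_0) \geq 0$; and since the valuation is additive and nonnegative on these factors at such a $t_0$, $\mathcal{P}$ divides the product $m_{\bf{\underline{t}}}(T) \cdot m_{1/\bf{\underline{t}}}(T)$ if and only if it divides one of $m_{\bf{\underline{t}}}(T)$, $m_{1/\bf{\underline{t}}}(T)$. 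I would state this reduction first so that condition (2) can be analyzed one branch point at a time.

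For the implication (1) $\Rightarrow$ (2), I would start from a point $t_0 \in \mathbb{P}^1(k) \smallsetminus \{t_1,\dots,t_r\}$ at which $\mathcal{P}$ ramifies in $E_{t_0}/k$. Because $\mathcal{P}$ is a good prime, it has no vertical ramification (definition \ref{bon premier}(3)), so part (1) of the Specialization Inertia Theorem forces $t_0$ to meet some branch point $t_i$ modulo $\mathcal{P}$. As $m_{t_i}(T) \in A_\mathcal{P}[T]$, the converse in part (2) of lemma \ref{lemme} applies and yields $I_\mathcal{P}(t_0,t_i) > 0$. Unwinding definition \ref{**} according to the sign of $v_\mathcal{P}(t_0)$ then gives either $v_\mathcal{P}(m_{t_i}(t_0)) > 0$ (when $v_\mathcal{P}(t_0)\geq 0$) or $v_\mathcal{P}(m_{1/t_i}(1/t_0)) > 0$ (when $v_\mathcal{P}(t_0)\leq 0$), exhibiting $\mathcal{P}$ as a prime divisor of $m_{t_i}(T)$ or of $m_{1/t_i}(T)$, hence of the product.

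For the converse (2) $\Rightarrow$ (1), by the preliminary reduction I may assume $\mathcal{P}$ is a prime divisor of $m_{\bf{\underline{t}}}(T)$, the case of $m_{1/\bf{\underline{t}}}(T)$ being symmetric via the $1/T$-coordinate. I would pick $t_0 \in A_\mathcal{P}$ with $v_\mathcal{P}(m_{t_i}(t_0)) > 0$ for some $i$, so that $I_\mathcal{P}(t_0,t_i) > 0$ and, by part (1) of lemma \ref{lemme}, $t_0$ meets $t_i$ modulo $\mathcal{P}$. Using remark \ref{rem div}, I would then replace $t_0$ by $t_0 + a$ for a suitable $a \in \mathcal{P} A_\mathcal{P}$ of sufficiently large valuation: this preserves $v_\mathcal{P}(m_{t_i}(t_0)) > 0$ (hence the meeting property) while, since there are infinitely many admissible choices of $a$ but only finitely many branch points, arranging that neither $t_0$ nor $t_0 + x_\mathcal{P}$ lies in $\{t_1,\dots,t_r\}$. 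Now proposition \ref{transition} applies with its condition (1) satisfied, so its condition (2) holds: $\mathcal{P}$ ramifies in $E_{t_0}/k$ or in $E_{t_0+x_\mathcal{P}}/k$, and either way $\mathcal{P}$ ramifies in at least one specialization of $E/k(T)$.

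The step I expect to be the main obstacle is precisely this last one: a single meeting point need not force ramification, because the intersection multiplicity $I_\mathcal{P}(t_0,t_i)$ could be a multiple of the order of $g_i$, giving trivial inertia by part (2) of the Specialization Inertia Theorem. The role of proposition \ref{transition} is exactly to bypass this, by passing to the neighbor $t_0 + x_\mathcal{P}$ where the multiplicity drops to $1$ and ramification is guaranteed; the supporting difficulty is the bookkeeping needed to keep all quantities integral at $\mathcal{P}$ and to dodge the branch points under perturbation, together with carrying the argument through symmetrically in the reciprocal coordinate for the $m_{1/\bf{\underline{t}}}(T)$ case.
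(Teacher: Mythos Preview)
Your proof is correct and follows essentially the same route as the paper: for (1)$\Rightarrow$(2) you combine part (1) of the Specialization Inertia Theorem with the converse in lemma \ref{lemme} to extract a positive intersection multiplicity, and for (2)$\Rightarrow$(1) you perturb $t_0$ via remark \ref{rem div} to avoid the branch points and then invoke proposition \ref{transition}. The paper's proof is organized identically, with the same reduction to $m_{\bf{\underline{t}}}(T)$ versus $m_{1/\bf{\underline{t}}}(T)$ and the same appeal to proposition \ref{transition} as the mechanism that guarantees ramification at one of $t_0$ or $t_0+x_\mathcal{P}$.
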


\begin{proof}
First assume that there is some $t_0 \in \mathbb{P}^1(k) \smallsetminus \{t_1,\dots,t_r\}$ such that $\mathcal{P}$ ramifies in $E_{t_0}/k$. Suppose that $v_\mathcal{P}(t_0) \geq 0$ (the other case for which $v_\mathcal{P}(t_0) \leq 0$ is similar). As noted in the second paragraph of the proof of proposition \ref{transition}, one has $v_\mathcal{P}(m_{t_i}(t_0)) >0$ for some $i \in \{1,\dots,r\}$. But $m_{t_1}(T), \dots, m_{t_r}(T), m_{1/t_1}(T), \dots, m_{1/t_r}(T) \in A_\mathcal{P}[T]$ and $t_0 \in A_\mathcal{P}$. Then $v_\mathcal{P}(m_{\bf{\underline{t}}}(t_0) \cdot m_{1/\bf{\underline{t}}}(t_0)) >0$ and condition (2) holds.

Conversely assume that condition (2) holds. Fix $t_0 \in k$ such that $v_\mathcal{P}(m_{\bf{\underline{t}}}(t_0) \cdot m_{1/\bf{\underline{t}}}(t_0)) >0$. From statement ($*$), one has $v_\mathcal{P}(t_0) \geq 0$. Assume that $v_\mathcal{P}(m_{\bf{\underline{t}}}(t_0)) >0$ (the other case for which $v_\mathcal{P}(m_{1/\bf{\underline{t}}}(t_0)) >0$ is similar). Then there is an index $i \in \{1,\dots,r\}$ such that $v_\mathcal{P}(m_{t_i}(t_0)) >0$ (and so condition (1) of proposition \ref{transition} holds from part (1) of lemma \ref{lemme}). From remark \ref{rem div}, one may assume that neither $t_0$ nor $t_0 + x_\mathcal{P}$ is in $\{t_1,\dots,t_r\}$ and the conclusion follows from proposition \ref{transition}.
\end{proof}

\section{Specializations with specified local behavior}

This section is devoted to theorem \ref{Beckmann4} (our most general result) which is more general than theorem 1 from the introduction; it is the aim of $\S$3.1.1. We then give in $\S$3.1.2 two more practical forms of this statement (corollaries \ref{Hilbert} and \ref{gc}). We next apply these results to some classical Galois extensions of $\mathbb{Q}(T)$ in $\S$3.2. Finally $\S$3.3 is devoted to theorem \ref{DGL} which, as alluded to in the introduction, conjoins theorem \ref{Beckmann4} and previous results.

\subsection{Specializations with specified inertia groups} Fix a Dedekind domain $A$ of characteristic zero and denote its quotient field by $k$. Let $G$ be a finite group and $E/k(T)$ a Galois extension of group $G$ such that $E/k$ is regular. Denote its branch point set by $\{t_1, \dots, t_r\}$ and its inertia canonical invariant by $(C_1,\dots,C_r)$.

\subsubsection{General result} Given a positive integer $s$, fix $s$ distinct good primes $\mathcal{P}_1,\dots,\mathcal{P}_s$ for $E/k(T)$ and $s$ couples $(i_1,a_1), \dots, (i_s,a_s)$ where, for each index $j \in \{1,\dots,s\}$,

\vspace{0.5mm}

\noindent
(a) $i_j$ is an index in $\{1,\dots,r\}$ such that $\mathcal{P}_j$ is a prime divisor of the polynomial $m_{t_{i_j}}(T) \cdot m_{1/t_{i_j}}(T)$ and unitizes $t_{i_j}$,

\vspace{0.25mm}

\noindent
(b) $a_j$ is a positive integer.

\begin{theorem} \label{Beckmann4}
There are infinitely many distinct $t_0 \in k \smallsetminus \{t_1,\dots,t_r\}$ such that, for each $j \in \{1,\dots,s\}$, the inertia group at $\mathcal{P}_j$ of the specialization $E_{t_0}/k$ of $E/k(T)$ at $t_0$ is generated by some element of $C_{i_j}^{a_j}$.
\end{theorem}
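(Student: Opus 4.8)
The plan is to construct the specialization point $t_0$ prime-by-prime using a controlled approximation argument based on the intersection multiplicity, and then combine the local constraints via the Chinese Remainder Theorem (CRT) to produce infinitely many global points $t_0 \in k$. The guiding principle is the Specialization Inertia Theorem, part (2): once I know $t_0$ and $t_{i_j}$ meet modulo $\mathcal{P}_j$ with $\mathcal{P}_j$ a good prime unitizing $t_{i_j}$, the inertia group of $E_{t_0}/k$ at $\mathcal{P}_j$ is conjugate to $\langle g_{i_j}^{I_{\mathcal{P}_j}(t_0,t_{i_j})} \rangle$. Since $g_{i_j} \in C_{i_j}$, the class of $g_{i_j}^{a_j}$ is $C_{i_j}^{a_j}$, so it suffices to arrange that the intersection multiplicity $I_{\mathcal{P}_j}(t_0,t_{i_j})$ equals $a_j$ exactly (or is congruent to $a_j$ modulo the order $e_{i_j}$ of $g_{i_j}$, since $\langle g_{i_j}^{a} \rangle$ depends only on $a \bmod e_{i_j}$).

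First I would work one prime $\mathcal{P}_j$ at a time. By hypothesis (a), $\mathcal{P}_j$ is a prime divisor of $m_{t_{i_j}}(T) \cdot m_{1/t_{i_j}}(T)$, so there is a point at which the relevant minimal polynomial has positive valuation; by Lemma \ref{lemme} this point meets $t_{i_j}$. The key local step is to tune the intersection multiplicity to the prescribed value $a_j$. Writing $x_j$ for a uniformizer of $\mathcal{P}_j A_{\mathcal{P}_j}$, I would start from a root-approximating value $t^{(j)}$ with $v_{\mathcal{P}_j}(m_{t_{i_j}}(t^{(j)})) > 0$ and use Taylor's formula in the style of Remark \ref{rem div} and Proposition \ref{transition}: replacing $t^{(j)}$ by $t^{(j)} + c\, x_j^{\,a_j - 1}$ for a suitable unit $c$, and invoking Lemma \ref{derivee} to guarantee $v_{\mathcal{P}_j}(m'_{t_{i_j}}(t^{(j)})) = 0$ (so the linear term dominates and the multiplicity drops to precisely $a_j$), I can force $v_{\mathcal{P}_j}(m_{t_{i_j}}(t_0)) = a_j$, i.e. $I_{\mathcal{P}_j}(t_0,t_{i_j}) = a_j$. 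Here the unitizing hypothesis and $m_{t_{i_j}}(T) \in A_{\mathcal{P}_j}[T]$ keep the intersection multiplicity equal to the $\mathcal{P}_j$-valuation of $m_{t_{i_j}}(t_0)$, via Lemma \ref{lemme}(2).

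Next I would globalize. The required conditions at the distinct primes $\mathcal{P}_1,\dots,\mathcal{P}_s$ are congruence conditions on $t_0$ modulo suitable powers of each $\mathcal{P}_j$ (namely $t_0 \equiv t^{(j)} + c\, x_j^{\,a_j-1}$ modulo $\mathcal{P}_j^{\,a_j+1}$, which pins down $v_{\mathcal{P}_j}(m_{t_{i_j}}(t_0))$ exactly by Remark \ref{rem div}). Since the $\mathcal{P}_j$ are distinct primes of the Dedekind domain $A$, CRT over $A$ (or over the semilocal ring obtained by localizing at these primes) yields a single $t_0 \in k$ satisfying all of them simultaneously; intersecting with the complement of the finite branch-point set $\{t_1,\dots,t_r\}$ is harmless. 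To get \emph{infinitely many} such $t_0$, I would observe that the congruence conditions define a nonempty coset, hence contain infinitely many elements of $k$; adding to $t_0$ any element of sufficiently high valuation at every $\mathcal{P}_j$ (by Remark \ref{rem div}, this leaves each $v_{\mathcal{P}_j}(m_{t_{i_j}}(t_0))$ unchanged) produces infinitely many admissible points while preserving all the inertia data.

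The main obstacle I anticipate is the exact calibration of the intersection multiplicity in the local step: it is easy to force $I_{\mathcal{P}_j}(t_0,t_{i_j}) > 0$, but arranging it to equal the \emph{prescribed} $a_j$ (rather than some uncontrolled positive value) requires the nondegeneracy input from Lemma \ref{derivee}, which says that at a good prime the polynomial $m_{t_{i_j}}$ cannot have a double root modulo $\mathcal{P}_j$ where its value also vanishes. This is precisely what lets me peel off the multiplicity one uniformizer-power at a time and land on $a_j$; handling the two cases $v_{\mathcal{P}_j}(t_0) \geq 0$ and $v_{\mathcal{P}_j}(t_0) \leq 0$ (the latter using $m_{1/t_{i_j}}$ and the substitution on $1/t_0$, as in Proposition \ref{transition}) is routine but must be tracked carefully so that the global CRT point respects whichever chart each prime lives in.
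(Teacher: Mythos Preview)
Your plan is the paper's plan: force $I_{\mathcal{P}_j}(t_0,t_{i_j})=a_j$ locally via Taylor expansion and the simple-root input of Lemma~\ref{derivee}, then glue and invoke part~(2) of the Specialization Inertia Theorem. Two details need correction, however.

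First, the local calibration does not work with the single perturbation $t^{(j)}+c\,x_j^{\,a_j-1}$. With $v_{\mathcal{P}_j}(m'(t^{(j)}))=0$, Taylor gives $m(t^{(j)}+c\,x_j^{\,e}) = m(t^{(j)}) + c\,x_j^{\,e}\,m'(t^{(j)}) + O(x_j^{2e})$, so the linear term has valuation $e$, not $e+1$; your choice $e=a_j-1$ lands on $a_j-1$. More seriously, this only \emph{lowers} the valuation (it needs $v_{\mathcal{P}_j}(m(t^{(j)}))>e$), whereas the initial $t^{(j)}$ may have $v_{\mathcal{P}_j}(m(t^{(j)}))=1<a_j$. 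The paper's Lemma~3.10 resolves this by first dropping to valuation $1$ (add $x_j$ if necessary), then \emph{climbing} one step at a time via a Newton correction $u=-m(\theta_{j,d})/m'(\theta_{j,d})+x_j^{\,d+1}$, which yields $v_{\mathcal{P}_j}(m(\theta_{j,d}+u))=d+1$ for $d\ge 2$ (with a separate treatment of $d=1\to 2$). Your phrase ``peel off one uniformizer-power at a time'' suggests only the downward direction; both are needed.

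Second, when some $t_{i_j}=\infty$ you need $v_{\mathcal{P}_j}(t_0)<0$ at those primes and $v_{\mathcal{P}_j}(t_0)\ge 0$ at the others. CRT over $A$ (or over any semilocal subring of $k$) only produces elements of nonnegative valuation at the chosen primes, so it cannot mix charts. The paper handles this case separately with the Artin--Whaples approximation theorem in $k$, prescribing $\theta$ close to $\theta_j$ at primes in $S$ and close to $1/x_j^{a_j}$ at primes in $S^*$. Your remark that this is ``routine'' is fair once you name the right tool, but it is not CRT.
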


\noindent
{\it{Addendum}} \ref{Beckmann4}.  For each $j \in \{1,\dots,s\}$, let $x_{\mathcal{P}_j} \in A$ be a generator of $\mathcal{P}_j A_{\mathcal{P}_j}$. Denote the set of all $j \in \{1,\dots,s\}$ such that $t_{i_j} \not= \infty$ by $S$. 

There exists some $\theta \in k$ such that the conclusion of theorem \ref{Beckmann4} holds at any point $t_{0,u} \in k \smallsetminus \{t_1,\dots,t_r\}$ of the form $t_{0,u}= \theta + u \, \prod_{l \in {S}} x_{\mathcal{P}_l}^{a_l+1}$, with $u$ any element of $k$ such that $v_{\mathcal{P}_l}(u) \geq 0$ for each index $l \in \{1,\dots,s\}$. Furthermore, if $S= \{1,\dots,s\}$ (in particular if $\infty$ is not a branch point), then such an element $\theta$ may be chosen in $A$.

\begin{remark} \label{ur}
For some $j$, there may be no index $i$ such that $\mathcal{P}_j$ is a prime divisor of $m_{t_{i}}(T) \cdot m_{1/t_{i}}(T)$. In this case, if $\mathcal{P}_j$ unitizes each branch point, then $E_{t_0}/k$ ramifies at $\mathcal{P}_j$ for no point $t_0 \in \mathbb{P}^1(k) \smallsetminus \{t_1,\dots,t_r\}$ (corollary \ref{transition2}). If there exists such an index $i_j$, theorem \ref{Beckmann4} also provides specializations of $E/k(T)$ which each does not ramify at $\mathcal{P}_j$, by taking $a_j$ equal to (a multiple of) the order of the elements of $C_{i_j}$. Conjoining these two facts yields the following:

\vspace{1.5mm}

\noindent
{\it{Assume that each prime $\mathcal{P}_j$, $j=1,\dots,s$, is a good prime for $E/k(T)$ unitizing each branch point. Then there exist infinitely many distinct points $t_0 \in k \smallsetminus \{t_1,\dots,t_r\}$ such that $\mathcal{P}_1,\dots,\mathcal{P}_s$ are unramified in $E_{t_0}/k$.}}

\vspace{1.5mm}

\noindent
As in theorem \ref{Beckmann4}, the conclusion holds at all (but finitely many) points in an arithmetic progression.
\end{remark}

Theorem \ref{Beckmann4} is proved in $\S$3.4.

\subsubsection{Conjoining theorem \ref{Beckmann4} and the Hilbert specialization property} Continue with the notation from $\S$3.1.1. We give two situations where infinitely many specializations from theorem \ref{Beckmann4} have Galois group $G$.

\vspace{2.5mm}

\noindent
(a) {\it{Hilbertian base field.}} Assume that $k$ is hilbertian and fix an element $\theta$ as in addendum \ref{Beckmann4}. From \cite[lemma 3.4]{Gey78}, there exist infinitely many distinct elements $u \in \bigcap_{l=1}^s A_{\mathcal{P}_l}$ such that the specializations $E_{t_{0,u}}/k$ of $E/k(T)$ at $t_{0,u}=\theta + u \, \prod_{l \in {S}} x_{\mathcal{P}_l}^{a_l+1}$ are linearly disjoint and each has Galois group $G$. Hence corollary \ref{Hilbert} below follows:

\begin{corollary} \label{Hilbert}
For infinitely many distinct points $t_0 \in k \smallsetminus \{t_1,\dots,t_r\}$ in some arithmetic progression, the specializations $E_{t_0}/k$ of $E/k(T)$ at $t_0$ are linearly disjoint and each satisfies the following two conditions:

\vspace{0.5mm}

\noindent
{\rm{(1)}} ${\rm{Gal}}(E_{t_{0}}/k)=G$,

\vspace{0.5mm}

\noindent
{\rm{(2)}} for each index $j \in \{1,\dots,s\}$, the inertia group of $E_{t_{0}}/k$ at $\mathcal{P}_j$ is generated by some element of $C_{i_j}^{a_j}$.
\end{corollary}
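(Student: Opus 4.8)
The plan is to fuse the inertia-group conclusion of Theorem~\ref{Beckmann4} with the hilbertian hypothesis on $k$, both applied to the same one-parameter family $t_{0,u} = \theta + u \prod_{l \in S} x_{\mathcal{P}_l}^{a_l+1}$ furnished by the addendum.

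First I would invoke the addendum to Theorem~\ref{Beckmann4} to fix an element $\theta \in k$ such that condition~(2) of the corollary holds at every point $t_{0,u} = \theta + u \prod_{l \in S} x_{\mathcal{P}_l}^{a_l+1}$, for each $u \in k$ with $v_{\mathcal{P}_l}(u) \geq 0$ for all $l \in \{1,\dots,s\}$; equivalently, for each $u \in \bigcap_{l=1}^s A_{\mathcal{P}_l}$. With $\theta$ thus fixed, the inertia-group requirement is discharged once and for all: condition~(2) holds automatically at $t_{0,u}$ for every such $u$, independently of any further choice.

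It remains to select, among these admissible $u$, infinitely many for which the Galois-group and linear-disjointness assertions also hold. Here I would use that $k$ is hilbertian. Since $u \mapsto \theta + u \prod_{l \in S} x_{\mathcal{P}_l}^{a_l+1}$ is a non-constant affine map, the extension $E/k(T)$ pulls back to an extension of $k(u)$ to which the Hilbert specialization property applies; Geyer's lemma \cite[lemma~3.4]{Gey78} provides the quantitative form adapted to the localizations, yielding infinitely many distinct $u \in \bigcap_{l=1}^s A_{\mathcal{P}_l}$ such that the specializations $E_{t_{0,u}}/k$ are pairwise linearly disjoint and each has Galois group $G$.

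Putting the two steps together, each of the infinitely many $u$ produced by Geyer's lemma lies in $\bigcap_{l=1}^s A_{\mathcal{P}_l}$, so conditions~(1) and~(2) hold simultaneously at $t_{0,u}$, and the corresponding points run through an arithmetic progression with common difference $\prod_{l \in S} x_{\mathcal{P}_l}^{a_l+1}$. The one genuinely non-formal point is the compatibility of the two ranges of $u$: the addendum guarantees condition~(2) for every $\mathcal{P}_l$-integer $u$, whereas the hilbertian property a priori only produces irreducible specializations as $T$ ranges over a Hilbert subset of $k$. The role of Geyer's lemma is exactly to ensure that such a Hilbert subset meets the restricted progression $\theta + \bigl(\prod_{l \in S} x_{\mathcal{P}_l}^{a_l+1}\bigr) \cdot \bigcap_l A_{\mathcal{P}_l}$ in infinitely many points while retaining linear disjointness, and verifying that its hypotheses are met in the present localized setting is where the real content lies.
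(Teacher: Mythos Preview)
Your proposal is correct and follows essentially the same approach as the paper: fix $\theta$ from the addendum to Theorem~\ref{Beckmann4} so that condition~(2) holds along the whole arithmetic progression, then invoke \cite[lemma~3.4]{Gey78} to extract infinitely many $u \in \bigcap_{l=1}^s A_{\mathcal{P}_l}$ giving linearly disjoint specializations with full Galois group $G$. Your additional remarks on the compatibility of the two ranges of $u$ are a helpful gloss, but the argument itself matches the paper's.
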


\noindent
(b) {\it{$g$-complete hypothesis.}} Recall that a set $\Sigma$ of conjugacy classes of $G$ is said to be {\it{$g$-complete}} (a terminology due to Fried \cite{Fri95}) if no proper subgroup of $G$ intersects each conjugacy class in $\Sigma$. For instance, the set of all conjugacy classes of $G$ is g-complete \cite{Jor72}.

Assume in corollary \ref{gc} below that $k$ is a number field and the set $\{C_1,\dots,C_r\}$ is g-complete.

\begin{corollary} \label{gc}
For any point $t_0 \in k \smallsetminus \{t_1,\dots,t_r\}$ in some arithmetic progression, the specialization $E_{t_0}/k$ of $E/k(T)$ at $t_0$ satisfies the following two conditions:

\vspace{0.5mm}

\noindent
{\rm{(1)}} ${\rm{Gal}}(E_{t_{0}}/k)=G$,

\vspace{0.5mm}

\noindent
{\rm{(2)}} for each index $j \in \{1,\dots,s\}$, the inertia group of $E_{t_{0}}/k$ at $\mathcal{P}_j$ is generated by some element of $C_{i_j}^{a_j}$.
\end{corollary}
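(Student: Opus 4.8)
The plan is to combine Theorem \ref{Beckmann4} (which controls the inertia at the primes $\mathcal{P}_j$) with a $g$-completeness argument that forces the full Galois group $G$ to appear. The key point is that, unlike in part (a) where hilbertianity is invoked to get $\mathrm{Gal}(E_{t_0}/k)=G$, here we must extract the surjectivity onto $G$ from the inertia data itself. First I would invoke the Addendum to Theorem \ref{Beckmann4}: there is an element $\theta \in k$ such that the conclusion (2) holds at every point $t_{0,u}=\theta + u\,\prod_{l\in S} x_{\mathcal{P}_l}^{a_l+1}$ in the arithmetic progression, for all $u\in k$ with $v_{\mathcal{P}_l}(u)\ge 0$. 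Fixing such a $\theta$, I would work with an arbitrary point $t_0$ in this progression and show that condition (1) is automatic.

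For condition (1), recall that $\mathrm{Gal}(E_{t_0}/k)$ is the decomposition group $D$ of $E/k(T)$ at $t_0$, a subgroup of $G$. The idea is to exhibit, for each branch point $t_i$, an element of $C_i$ that lies in (a suitable conjugate of) $D$, so that $D$ meets every class in the $g$-complete set $\{C_1,\dots,C_r\}$ and hence must equal $G$. The mechanism is the Specialization Inertia Theorem: when $t_0$ meets $t_i$ modulo a good prime $\mathcal{P}$ unitizing $t_i$, the inertia group of $E_{t_0}/k$ at $\mathcal{P}$ is $\langle g_i^{I_\mathcal{P}(t_0,t_i)}\rangle$, and this inertia group sits inside the decomposition group $D$. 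Thus if for \emph{every} index $i\in\{1,\dots,r\}$ I can arrange a prime $\mathcal{P}$ (good, unitizing $t_i$) at which $t_0$ meets $t_i$ with $I_\mathcal{P}(t_0,t_i)$ \emph{not} a multiple of the order of $g_i$, then a nontrivial power $g_i^{a}$ with $\langle g_i^a\rangle = \langle g_i^{I_\mathcal{P}(t_0,t_i)}\rangle$ lands in $D$; since $g_i$ generates a cyclic group, a generator of $\langle g_i^{I_\mathcal{P}(t_0,t_i)}\rangle$ is then a power of $g_i$ in $D$, and choosing the intersection multiplicity coprime to the order yields $g_i$ itself (up to conjugacy) in $D$, placing an element of $C_i$ in $D$.

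The hard part will be guaranteeing that for \emph{each} of the $r$ branch points one has a usable prime producing nontrivial inertia of the correct class in the single decomposition group attached to one common point $t_0$. Here I expect the argument to proceed by choosing, for each $i$, a good prime $\mathcal{P}$ unitizing $t_i$ that is a prime divisor of $m_{t_i}(T)\cdot m_{1/t_i}(T)$ (such primes exist for all but finitely many, by the reduction theory already developed via Lemma \ref{lemme} and Corollary \ref{transition2}), and then assigning exponents $a_\mathcal{P}$ coprime to the order $e_i$ of $g_i$ so that $C_{i}^{a_\mathcal{P}}=C_i$; enlarging the finite prime set $\{\mathcal{P}_1,\dots,\mathcal{P}_s\}$ to include one such prime per branch point and re-applying Theorem \ref{Beckmann4}, I would secure that at each added prime the inertia generator lies in $C_i$ and hence an element of $C_i$ lies in $D$. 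The $g$-completeness of $\{C_1,\dots,C_r\}$ then forces $D=G$, giving (1), while the original primes $\mathcal{P}_1,\dots,\mathcal{P}_s$ retain the prescribed behavior (2) from Theorem \ref{Beckmann4}. The number-field hypothesis enters in ensuring the existence of infinitely many admissible auxiliary primes (one per branch point) via the finiteness of bad primes and Chebotarev-type availability of prime divisors.
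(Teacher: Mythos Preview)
Your proposal is correct and follows essentially the same approach as the paper: enlarge the set of primes by one auxiliary good prime $\mathcal{P}'_i$ per branch point $t_i$ (existence via Tchebotarev, as you note), apply Theorem \ref{Beckmann4} and its Addendum once to the enlarged set, and use the $g$-completeness of $\{C_1,\dots,C_r\}$ to force $\mathrm{Gal}(E_{t_0}/k)=G$. The only cosmetic differences are that the paper applies Theorem \ref{Beckmann4} directly to the enlarged set (rather than first to $\{\mathcal{P}_1,\dots,\mathcal{P}_s\}$ and then ``re-applying'', which would change $\theta$ anyway), and simply takes $a_{\mathcal{P}'_i}=1$ rather than a general exponent coprime to $e_i$; note also that $a$ coprime to $e_i$ gives $\langle g_i^a\rangle=\langle g_i\rangle$ (hence $D$ meets $C_i$), but not necessarily $C_i^a=C_i$ as you wrote.
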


\begin{proof}
For each index $i \in \{1,\dots,r\}$, pick a prime divisor $\mathcal{P}'_i$ of $m_{t_i}(T) \cdot m_{1/t_i}(T)$ which is a good prime for $E/k(T)$ unitizing $t_i$ (such a prime may be found since, from the Tchebotarev density theorem, $m_{t_i}(T) \cdot m_{1/t_i}(T)$ classically has infinitely many distinct prime divisors). Assume that the primes $\mathcal{P}'_1, \dots, \mathcal{P}'_r, \mathcal{P}_1,\dots,\mathcal{P}_s$ are distinct.

Apply theorem \ref{Beckmann4} to the larger set of primes $\{\mathcal{P}_j \, / \, j \in \{1,\dots,s\}\} \cup \{\mathcal{P}'_i \, / \, i \in \{1,\dots,r\}\}$, each $\mathcal{P}_j$ given with the couple $(i_j,a_j)$ of the statement and each $\mathcal{P}'_i$ with the couple $(i,1)$. The conclusion on the primes $\mathcal{P}_1,\dots,\mathcal{P}_s$ is exactly part (2) of corollary \ref{gc} and, according to our g-complete hypothesis, that on the primes $\mathcal{P}'_1,\dots,\mathcal{P}'_r$ provides part (1). 

To obtain that $t_0$ can be any term in some arithmetic progression, we use the more precise conclusion of addendum \ref{Beckmann4}. It provides some $\theta \in k$ such that conditions (1) and (2) simultaneously hold at any $t_{0,u} = \theta + u \, (\prod_{l \in S} x_{\mathcal{P}_l}^{a_l+1} \cdot \prod_{l \in S'} x_{\mathcal{P}'_l}^{2}) \not \in \{t_1,\dots,t_r\}$, with $S'$ the set of all $i \in \{1,\dots,r\}$ such that $t_i \not= \infty$ and $u$ any element of $k$ satisfying $v_{\mathcal{P}_j}(u) \geq 0$ for each $j \in \{1,\dots,s\}$ and $v_{\mathcal{P}'_i}(u) \geq 0$ for each $i \in \{1,\dots,r\}$.
\end{proof}

\begin{remark} More generally, the proof shows that the conclusion of corollary \ref{gc} remains true if there exists some subset $I \subset \{1,\dots,r\}$ satisfying the following two conditions:

\vspace{0.5mm}

\noindent
(1) the set $\{C_i \, / \, i \in I \} \cup \, \{C_{i_j}^{a_j} \, / \, j=1,\dots,s\}$ is g-complete,

\vspace{0.25mm}

\noindent
(2) for each index $i \in I$, the polynomial $m_{t_i}(T) \cdot m_{1/t_i}(T)$ has infinitely many distinct prime divisors.

\vspace{0.5mm}

\noindent
In particular, we do not require the base field $k$ to be hilbertian.
\end{remark}

\subsection{Examples}
Fix $k=\mathbb{Q}$ (for simplicity) and let $G$ be a finite group. As a consequence of corollary \ref{Hilbert}, we obtain that

\vspace{1mm}

\noindent
$(**)$ {\it{there is a finite set $\mathcal{S}_{\rm{exc}}$ of primes such that, given a finite set $\mathcal{S}$ of primes, there are infinitely many linearly disjoint Galois extensions of $\mathbb{Q}$ of group $G$ which each ramifies at each prime of $\mathcal{S} \smallsetminus \mathcal{S}_{\rm{exc}}$}},

\vspace{1mm}

\noindent
provided that the following condition is satisfied:

\vspace{1mm}

\noindent
(H1/$\mathbb{Q}$) {\it{there is a Galois extension $E/\mathbb{Q}(T)$ of group $G$ with $E/\mathbb{Q}$ regular and at least one $\mathbb{Q}$-rational branch point}}\footnote{More generally, condition $(**)$ remains true if there is a Galois extension $E/\mathbb{Q}(T)$ of group $G$ with $E/\mathbb{Q}$ regular and such that all but finitely many primes are prime divisors of the polynomial $m_{\bf{\underline{t}}}(T) \cdot m_{1/\bf{\underline{t}}}(T)$ (introduced in \S2.3).}. 

\vspace{2mm}

Not all finite groups satisfy the inverse Galois theory condition (H1/$\mathbb{Q}$): for example, \cite[corollary 1.3]{DF90} shows that such a finite group should be of even order\footnote{This remains true if $\mathbb{Q}$ is replaced by any number field $k \subset \mathbb{R}$.}. But some do. We recall below several of them to which we then apply corollary \ref{Hilbert}.

\subsubsection{Symmetric groups} Let $n$, $m$, $q$, $r$ be positive integers such that $n\geq 3$, $1 \leq m \leq n$, $(m,n)=1$ and $q(n-m)-rn=1$. Denote the splitting field over $\mathbb{Q}(T)$ of the trinomial $X^n-T^rX^m+T^q$ by $E$. Then the extension $E/\mathbb{Q}$ is regular and the splitting extension $E/\mathbb{Q}(T)$ has Galois group $S_n$ and branch point set $\{0, \infty, m^m n^{-n} (n-m)^{n-m}\}$, with corresponding inertia groups generated by the disjoint product of an $m$-cycle and an $(n-m)$-cycle at $0$, an $n$-cycle at $\infty$ and a transposition at $m^m n^{-n} (n-m)^{n-m}$. See \cite[$\S$2.4]{Sch00}. 

As $S_n$ has trivial center, one easily shows that the bad primes for $E/\mathbb{Q}(T)$ are exactly the primes $\leq n$. Then corollary \ref{Sn} below immediately follows from corollary \ref{Hilbert} (and lemma \ref{spec}):

\begin{corollary} \label{Sn}
Given a positive integer $s$, fix $s$ distinct primes $p_1,\dots,p_s$ $>n$ and $s$ couples $(C_1,a_1), \dots, (C_s,a_s)$  where, for each $j \in \{1,\dots,s\}$,

\vspace{0.5mm}

\noindent
- $C_j$ is the conjugacy class in $S_n$ of all the $n$-cycles or of all the disjoint products of an $m$-cycle and an $(n-m)$-cycle or of all the transpositions,

\vspace{0.5mm}

\noindent
- $a_j$ is a positive integer.

\vspace{0.5mm}

\noindent
Then, for infinitely many distinct points $t_0 \in \mathbb{Q}$, the splitting extensions $E_{t_0}/\mathbb{Q}$ over $\mathbb{Q}$ of the trinomials $X^n-{t_0}^r X^m + {t_0}^q$ are linearly disjoint and each satisfies the following two conditions:

\vspace{0.5mm}

\noindent
{\rm{(1)}} ${\rm{Gal}}(E_{t_0}/\mathbb{Q}) = S_n$,

\vspace{0.5mm}

\noindent
{\rm{(2)}} for each index $j \in \{1,\dots,s\}$, the inertia group of $E_{t_0}/\mathbb{Q}$ at $p_j$ is generated by some element of $C_j^{a_j}$.
\end{corollary}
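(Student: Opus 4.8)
The plan is to deduce the corollary directly from corollary \ref{Hilbert}, applied over the hilbertian field $\mathbb{Q}$ with $G=S_n$ and the primes $\mathcal{P}_j=p_j\mathbb{Z}$, after translating the prescribed data into the language of \S3.1.1. Recall from the setup that $E/\mathbb{Q}(T)$ is the splitting field of $P(T,X)=X^n-T^rX^m+T^q$, that $E/\mathbb{Q}$ is regular with ${\rm{Gal}}(E/\mathbb{Q}(T))=S_n$, and that its three branch points $t_1=0$, $t_2=\infty$ and $t_3=m^mn^{-n}(n-m)^{n-m}$ are all $\mathbb{Q}$-rational, with respective inertia canonical conjugacy classes $C_1$ (disjoint products of an $m$-cycle and an $(n-m)$-cycle), $C_2$ ($n$-cycles) and $C_3$ (transpositions). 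Since $(m,n)=1$ and $n\geq3$ force $1\leq m<n$, the point $t_3$ is a nonzero finite rational number. For each $j\in\{1,\dots,s\}$ the prescribed class $C_j$ is by hypothesis exactly one of these three classes; let $i_j\in\{1,2,3\}$ be an index with $C_{i_j}=C_j$, and keep the same exponent $a_j$.

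First I would check that the couples $(i_j,a_j)$ satisfy condition (a) of \S3.1.1 for the primes $p_j$. As $S_n$ has trivial center, the bad primes for $E/\mathbb{Q}(T)$ are exactly the primes $\leq n$ (as noted just before the statement), so every $p_j>n$ is a good prime. It remains to verify, for each $j$, that $p_j$ is a prime divisor of $m_{t_{i_j}}(T)\cdot m_{1/t_{i_j}}(T)$ and unitizes $t_{i_j}$. When $t_{i_j}\in\{0,\infty\}$ (cases $i_j=1,2$), one has $m_{t_{i_j}}(T)\cdot m_{1/t_{i_j}}(T)=T$, condition (b) of the Specialization Inertia Theorem holds trivially, and every prime is a prime divisor of $T$. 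When $t_{i_j}=t_3$, the key elementary point is that $v_{p_j}(t_3)=0$: every prime factor of the numerator $m^m(n-m)^{n-m}$ and of the denominator $n^n$ of $t_3$ is at most $n$, hence $<p_j$. Thus $m_{t_3}(T)=T-t_3$ and $m_{1/t_3}(T)=T-1/t_3$ lie in $\mathbb{Z}_{(p_j)}[T]$, so $p_j$ unitizes $t_3$; and $p_j$ is a prime divisor of $m_{t_3}(T)$ because $t_3$ is a $p_j$-adic unit, so some $t_0\in\mathbb{Z}$ satisfies $v_{p_j}(t_0-t_3)>0$.

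Corollary \ref{Hilbert} then yields infinitely many distinct points $t_0\in\mathbb{Q}\smallsetminus\{0,t_3\}$, lying in a single arithmetic progression, whose specializations $E_{t_0}/\mathbb{Q}$ are linearly disjoint and satisfy ${\rm{Gal}}(E_{t_0}/\mathbb{Q})=S_n$ together with the inertia condition that the inertia group at $p_j$ is generated by some element of $C_{i_j}^{a_j}=C_j^{a_j}$. To recognise $E_{t_0}$ as the required splitting field I would invoke lemma \ref{spec}: for $t_0\notin\{0,\infty,t_3\}$ the specialized polynomial $P(t_0,X)=X^n-t_0^rX^m+t_0^q$ is separable, since the finite branch points of $E/\mathbb{Q}(T)$ are precisely the zeros of the $X$-discriminant of $P(T,X)$, namely $0$ and $t_3$. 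Hence lemma \ref{spec} identifies $E_{t_0}/\mathbb{Q}$ with the splitting extension over $\mathbb{Q}$ of $X^n-t_0^rX^m+t_0^q$, and the linear disjointness carries over verbatim, which gives the statement.

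This is essentially a verification of hypotheses, so I do not expect a genuine obstacle. The only two points requiring a little care are the numerical check that every $p_j>n$ unitizes the transposition branch point $t_3$ and is a prime divisor of its minimal polynomials, and the separability of $P(t_0,X)$ needed to apply lemma \ref{spec}; both rest on the $\mathbb{Q}$-rationality of all three branch points together with the fact that $t_3$ involves only primes $\leq n$.
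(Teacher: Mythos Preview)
Your proposal is correct and follows exactly the route the paper indicates: the paper states that the corollary ``immediately follows from corollary \ref{Hilbert} (and lemma \ref{spec})'' after observing that the bad primes are precisely those $\leq n$, and you have simply spelled out the verification of the hypotheses of corollary \ref{Hilbert} (good prime, prime divisor, unitizing conditions for each of the three $\mathbb{Q}$-rational branch points) and the application of lemma \ref{spec}.
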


As the inertia canonical conjugacy class set of $E/\mathbb{Q}(T)$ is g-complete \cite[\S2.4]{Sch00}, one may use corollary \ref{gc} (instead of corollary \ref{Hilbert}) to obtain a more precise conclusion on points $t_0$ at which the conclusion holds (at the cost of dropping the linearly disjointness condition).

\subsubsection{The Monster and other groups} Let $G$ be a centerless finite group which occurs as the Galois group of a Galois extension $E/\mathbb{Q}(T)$ such that $E/\mathbb{Q}$ is regular and with branch point set $\{0,1,\infty\}$. It is easily checked that the set of bad primes for such an extension is exactly the set of prime divisors of the order of $G$.

From the {\it{rigidity method}}, several centerless finite groups are known to occur as the Galois group of such an extension of $\mathbb{Q}(T)$ (see {\it{e.g.}} \cite{Ser92} and \cite{MM99}). For example, applying corollary \ref{Hilbert} to that of group the Monster group M, branch point set $\{0,1,\infty\}$ and inertia canonical invariant $(2A,3B,29A)$ (according to the Atlas \cite{ATL} notation for conjugacy classes of finite groups) provides corollary \ref{Monstre} below:

\begin{corollary} \label{Monstre}
Given a positive integer $s$, fix $s$ distinct primes $p_1,\dots,p_s$ $\geq 73$ or in $\{37,43,53,61,67\}$ and $s$ couples $(C_1,a_1), \dots,(C_s,a_s)$ where, for each index $j \in \{1,\dots,s\}$,

\vspace{0.5mm}

\noindent
- $C_j$ is a conjugacy class of {\rm{M}} in $\{2A, 3B, 29A\}$,

\vspace{0.5mm}

\noindent
- $a_j$ is a positive integer.

\vspace{0.5mm}

\noindent
Then there exist infinitely many linearly disjoint Galois extensions of $\mathbb{Q}$ of group {\rm{M}} whose inertia group at $p_j$ is generated by some element of $C_j^{a_j}$ for each index $j \in \{1,\dots,s\}$.
\end{corollary}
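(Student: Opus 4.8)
The plan is to check that corollary \ref{Hilbert} applies verbatim to the extension $E/\mathbb{Q}(T)$ of group $\mathrm{M}$ described above and to read off its conclusion. Since $\mathbb{Q}$ is hilbertian, part (1) (the Galois group being $\mathrm{M}$) and the linear disjointness come for free once the hypotheses on the primes are met, so the only real task is to verify conditions (a) and (b) stated just before theorem \ref{Beckmann4} for each prescribed prime $p_j$. For each $j$ I would take $i_j$ to be the index of the branch point in $\{0,1,\infty\}$ whose inertia canonical conjugacy class is $C_j$; this is well defined because $2A$, $3B$, $29A$ are pairwise distinct (they have distinct element orders $2,3,29$), so the unordered invariant $(2A,3B,29A)$ determines a bijection with the three branch points.

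The first substantive point is the identification of the good primes. By the observation recorded just before the corollary, for a centerless extension with branch point set $\{0,1,\infty\}$ the bad primes are exactly the prime divisors of $|G|$. For $G=\mathrm{M}$ one has
$$|\mathrm{M}| = 2^{46}\cdot 3^{20}\cdot 5^{9}\cdot 7^{6}\cdot 11^{2}\cdot 13^{3}\cdot 17\cdot 19\cdot 23\cdot 29\cdot 31\cdot 41\cdot 47\cdot 59\cdot 71,$$
so these prime divisors are precisely $2,3,5,7,11,13,17,19,23,29,31,41,47,59,71$. Consequently the good primes for $E/\mathbb{Q}(T)$ are exactly the primes $\geq 73$ together with $37,43,53,61,67$, which is exactly the range in which the $p_j$ are permitted to lie; hence each $p_j$ is a good prime for $E/\mathbb{Q}(T)$.

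The second point is condition (a), and here the $\mathbb{Q}$-rationality of all three branch points makes everything elementary. Using the formulas for $m_{1/t_i}(T)$ one gets $m_0(T)\,m_{1/0}(T)=T$, $m_\infty(T)\,m_{1/\infty}(T)=T$ and $m_1(T)\,m_{1/1}(T)=(T-1)^2$. Every prime $p$ is a prime divisor of each of these (take $t_0=p$ for the factor $T$ and $t_0=1+p$ for $T-1$), and every prime unitizes each of $0,1,\infty$ since the relevant $m_{t_i}(T)$ and $m_{1/t_i}(T)$ all lie in $\mathbb{Z}_{(p)}[T]$. Thus, with $i_j$ chosen as above, condition (a) holds and $a_j$ is a positive integer as required by (b), so corollary \ref{Hilbert} applies and produces infinitely many linearly disjoint specializations $E_{t_0}/\mathbb{Q}$, each with Galois group $\mathrm{M}$ and with inertia group at $p_j$ generated by an element of $C_{i_j}^{a_j}=C_j^{a_j}$; this is precisely the assertion. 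I expect no genuine obstacle: granting corollary \ref{Hilbert}, the argument is purely a hypothesis check. The only actual content is arithmetic, namely the factorization of $|\mathrm{M}|$, which is exactly what forces the stated constraint that each $p_j$ be $\geq 73$ or lie in $\{37,43,53,61,67\}$, while the verification of condition (a) is immediate precisely because the three branch points are rational, so the associated polynomials are $T$ and $(T-1)^2$.
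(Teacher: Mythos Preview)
Your proposal is correct and follows exactly the paper's approach: the paper simply states that corollary \ref{Monstre} follows by ``applying corollary \ref{Hilbert}'' to the Monster extension with branch points $\{0,1,\infty\}$ and inertia invariant $(2A,3B,29A)$, and you have spelled out precisely the hypothesis checks (good primes via the factorization of $|\mathrm{M}|$, and condition (a) via the rationality of $0,1,\infty$) that this application implicitly requires.
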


\subsection{Conjoining theorem \ref{Beckmann4} and previous results}
Fix $k=\mathbb{Q}$ for simplicity. As already noted in remark \ref{ur}, theorem \ref{Beckmann4} also includes trivial ramification. Previous works, namely \cite{DG11} and \cite{DG12}, are concerned with this kind of conclusions: D\`ebes and Ghazi show that, for each finite group $G$, any Galois extension $E/\mathbb{Q}(T)$ of group $G$ such that $E/\mathbb{Q}$ is regular has specializations with the same group which each is unramified at any finitely many prescribed large enough primes and such that the associated Frobenius at each such prime is in any specified conjugacy class of $G$. 

As stated in theorem \ref{DGL} below, it is in fact possible to conjoin these two results to obtain Galois extensions of $\mathbb{Q}$ of various finite groups with specified local behavior at finitely many given primes.

\subsubsection{Statement of the result} Let $G$ be a finite group and $E/\mathbb{Q}(T)$ a Galois extension of group $G$ with $E/\mathbb{Q}$ regular. Denote its branch point set by $\{t_1,\dots,t_r\}$ and its inertia canonical invariant by $(C_1,\dots,C_r)$.

Let ${\mathcal{S}}_{\rm{ra}}$ and ${\mathcal{S}}_{\rm{ur}}$ be two disjoint finite sets of good\footnote{Condition (4) of definition \ref{bon premier} may be removed for primes in $\mathcal{S}_{\rm{ur}}$.} primes for $E/\mathbb{Q}(T)$ such that $\mathcal{S}_{\rm{ur}} \not= \emptyset$ and each prime $p \in \mathcal{S}_{\rm{ur}}$ satisfies $p \geq r^2|G|^2$ \footnote{In \cite{DG12}, the bound is $p \geq 4r^2|G|^2$. This slight difference comes from a slight technical improvement in the bounds obtained from the Lang-Weil estimates (see \cite[$\S$3.2]{DL13}).}. For each prime $p \in \mathcal{S}_{\rm{ur}}$, fix a conjugacy class $C_p$ of $G$. For each prime $p \in \mathcal{S}_{\rm{ra}}$, let $a_p$ be a positive integer and $i_p \in \{1,\dots,r\}$ such that $t_{i_p} \not= \infty$, $p$ unitizes $t_{i_p}$ and is a prime divisor of $m_{t_{i_p}}(T) \cdot m_{1/t_{i_p}}(T)$.

Assume in theorem \ref{DGL} below that the set $\{C_{i_p}^{a_p} \, / \, p \in \mathcal{S}_{\rm{ra}}\} \cup \{C_{p} \, / \, p \in \mathcal{S}_{\rm{ur}}\}$ is g-complete. At the cost of throwing in more primes in $\mathcal{S}_{\rm{ur}}$ with appropriate associated conjugacy classes of $G$, we may assume that this hypothesis holds.

\begin{theorem} \label{DGL}There exists some integer $\theta$ satisfying the following conclusion. For each integer $t_0 \equiv \theta \, \, \, {\rm{mod}} \, \, \, (\prod_{p \in \mathcal{S}_{\rm{ur}}} p \cdot \prod_{p \in \mathcal{S}_{\rm{ra}}} p^{a_p+1})$, $t_0$ is not a branch point and the specialization $E_{t_0}/\mathbb{Q}$ of $E/\mathbb{Q}(T)$ at $t_0$ satisfies the following three conditions:

\vspace{0.5mm}

\noindent
{\rm{(1)}} ${\rm{Gal}}(E_{t_0}/\mathbb{Q})=G$,

\vspace{0.5mm}

\noindent
{\rm{(2)}} for each prime $p \in \mathcal{S}_{\rm{ra}}$, the inertia group of $E_{t_0}/\mathbb{Q}$ at $p$ is generated by some element of $C_{i_p}^{a_p}$,

\noindent
{\rm{(3)}} for each prime $p \in \mathcal{S}_{\rm{ur}}$, $p$ does not ramify in $E_{t_0}/\mathbb{Q}$ and the associated Frobenius is in the conjugacy class $C_{p}$.
\end{theorem}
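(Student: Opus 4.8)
The plan is to establish conditions (2) and (3) separately---condition (2) via theorem \ref{Beckmann4} and its addendum, condition (3) via the D\`ebes--Ghazi results \cite{DG11}, \cite{DG12}---to merge the two resulting congruence conditions by the Chinese Remainder Theorem, and finally to deduce condition (1) from the g-complete hypothesis, exactly as in the proof of corollary \ref{gc}.

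First I would apply the addendum to theorem \ref{Beckmann4} to the primes of $\mathcal{S}_{\rm{ra}}$, each $p$ carrying its couple $(i_p,a_p)$. As $A=\mathbb{Z}$ and $t_{i_p}\neq\infty$ for every $p\in\mathcal{S}_{\rm{ra}}$, the set $S$ of the addendum is all of $\mathcal{S}_{\rm{ra}}$ and one may take each generator $x_{\mathcal{P}_p}$ to be $p$ itself; this yields an integer $\theta_{\rm{ra}}$ such that (2) holds at every non-branch-point $t_0\equiv\theta_{\rm{ra}}\ (\mathrm{mod}\ \prod_{p\in\mathcal{S}_{\rm{ra}}}p^{a_p+1})$. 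Next I would invoke the D\`ebes--Ghazi machinery at the primes of $\mathcal{S}_{\rm{ur}}$: since each such $p$ is good and satisfies $p\geq r^2|G|^2$, the Lang--Weil estimates provide an $\mathbb{F}_p$-point of the reduced cover whose Frobenius lies in the prescribed class $C_p$, and prescribing $t_0\bmod p$ to meet such a point for every $p\in\mathcal{S}_{\rm{ur}}$ produces an integer $\theta_{\rm{ur}}$ such that (3) holds at every $t_0\equiv\theta_{\rm{ur}}\ (\mathrm{mod}\ \prod_{p\in\mathcal{S}_{\rm{ur}}}p)$ of good reduction. Because $\mathcal{S}_{\rm{ra}}$ and $\mathcal{S}_{\rm{ur}}$ are disjoint, the two moduli are coprime, so the Chinese Remainder Theorem furnishes a single integer $\theta$ congruent to $\theta_{\rm{ra}}$ and to $\theta_{\rm{ur}}$ modulo the respective moduli. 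Both (2) and (3) then hold at every integer $t_0\equiv\theta\ (\mathrm{mod}\ \prod_{p\in\mathcal{S}_{\rm{ur}}}p\cdot\prod_{p\in\mathcal{S}_{\rm{ra}}}p^{a_p+1})$; since the residue of $\theta_{\rm{ur}}$ at each unramified prime may be chosen among the many values afforded by Lang--Weil, I would use this freedom to keep the finitely many integer branch points out of the progression, so that every such $t_0$ is indeed a non-branch-point.

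It then remains to derive (1). Write $H={\rm{Gal}}(E_{t_0}/\mathbb{Q})\subseteq G$. For each $p\in\mathcal{S}_{\rm{ra}}$, condition (2) exhibits a generator of the inertia group at $p$ lying in $C_{i_p}^{a_p}$; being an element of the Galois group, it shows $H\cap C_{i_p}^{a_p}\neq\emptyset$. For each $p\in\mathcal{S}_{\rm{ur}}$, condition (3) exhibits a Frobenius element whose $G$-conjugacy class is $C_p$; it too lies in $H$, whence $H\cap C_p\neq\emptyset$. Thus $H$ meets every class of the set $\{C_{i_p}^{a_p}\,/\,p\in\mathcal{S}_{\rm{ra}}\}\cup\{C_p\,/\,p\in\mathcal{S}_{\rm{ur}}\}$, which is g-complete by hypothesis; since no proper subgroup of $G$ has this property, $H=G$, giving (1).

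The delicate point is this final synthesis rather than the arithmetic: I must use \cite{DG12} only for the local Frobenius prescription at the unramified primes---decoupled from its own conclusion on the Galois group---and then observe that although only the $G$-conjugacy class $C_p$ is prescribed there, the Frobenius element itself belongs to the specialization group $H$, so that $H$ genuinely meets $C_p$. Granting this, the g-complete hypothesis supplies condition (1) at no extra cost, and the merging of the two congruence conditions is routine by coprimality of the moduli.
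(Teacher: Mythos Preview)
Your proposal is correct and follows essentially the same route as the paper's own proof: for each $p\in\mathcal{S}_{\rm ur}$ the paper invokes \cite{DG12} to obtain a residue $\theta_p\bmod p$ giving the prescribed Frobenius, for each $p\in\mathcal{S}_{\rm ra}$ it uses addendum~\ref{Beckmann4} to obtain a residue $\theta'_p\bmod p^{a_p+1}$ giving the prescribed inertia, combines these by the Chinese Remainder Theorem, and finally reads off condition (1) from the g-complete hypothesis exactly as you do. The only cosmetic difference is that the paper handles the primes one at a time before applying CRT, whereas you first bundle $\mathcal{S}_{\rm ra}$ and $\mathcal{S}_{\rm ur}$ separately; this is immaterial.
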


\begin{remark} \label{remark DGL}
(1) The condition in the data requiring finitely many primes to be left aside cannot be removed in general. Otherwise, given a prime $p$, either one of conditions (2) and (3) provides specializations of $E/\mathbb{Q}(T)$ which each does not ramify at $p$. As a consequence of results of Plans and Vila, this last conclusion does not hold in general \cite[propositions 2.3 and 2.5]{PV05}.

\vspace{1.5mm}

\noindent
(2) The Specialization Inertia Theorem provides some limitations to the natural question of prescribing the decomposition group at each prime $p \in \mathcal{S}_{\rm{ra}}$ of the specialization $E_{t_0}/\mathbb{Q}$. Indeed, if a given solvable subgroup $H \subset G$ is the decomposition group at a given large enough prime $p$ of some specialization of $E/\mathbb{Q}(T)$ ramifying at $p$, then $H$ should contain some non trivial power of some element of some inertia canonical conjugacy class (and so the order of $H$ is not relatively prime to the product of the ramification indices of the branch points). Here is an example where this condition does not hold.

Fix an odd prime $p'$ and an integer $n$ such that $n \geq p'^2$ and $p'$ does not divide $n(n-1)$. Next pick a prime $p$ such that $p > n$ and $p \equiv 1 \, \, {\rm{mod}} \, \, p'$. Now consider the  Galois extension $E/\mathbb{Q}(T)$ of group $G=S_n$ with $E/\mathbb{Q}$ regular and branch point set $\{0,1,\infty\}$ provided by the rigid triple of conjugacy classes of $S_n$ given by that of all the $n$-cycles, that of all the $(n-1)$-cycles and that of all the tranpositions. Next fix a Galois extension $F_p/\mathbb{Q}_p$ of group $H=\mathbb{Z}/p'\mathbb{Z} \times \mathbb{Z}/p'\mathbb{Z} \subset G=S_n$ (as $n \geq p'^2)$; such an extension exists as $p \equiv 1 \, \, {\rm{mod}} \, \, p'$. Then $F_p/\mathbb{Q}_p$ is not a specialization of $E\mathbb{Q}_p/\mathbb{Q}_p(T)$. Indeed, as $p > n$ and 0, 1 and $\infty$ are the branch points, part (2) of the Specialization Inertia Theorem shows that the ramification index of the valuation ideal $p \mathbb{Z}_p$ in any specialization of $E\mathbb{Q}_p/\mathbb{Q}_p(T)$ is a divisor of $2n(n-1)$. As $p > n$, the ramification index of $p \mathbb{Z}_p$ in $F_p/\mathbb{Q}_p$ is equal to $p'$ and our claim follows.
\end{remark}

\subsubsection{Proof of theorem \ref{DGL}} We first recall how \cite{DG12} handles condition (3). Fix a prime $p \in \mathcal{S}_{\rm{ur}}$ and an element $g_p \in C_p$. Denote the order of $g_p$ by $e_p$. Let $F_p/\mathbb{Q}_p$ be the unique unramified Galois extension of $\mathbb{Q}_p$ of degree $e_p$, given together with an isomorphism $f:{\rm{Gal}}(F_p/\mathbb{Q}_p) \rightarrow \, \langle g_p \rangle$ satisfying $f(\sigma)=g_p$ with $\sigma$ the Frobenius of the extension $F_p/\mathbb{Q}_p$. Let $\varphi: {\rm{G}}_{\mathbb{Q}_p} \rightarrow \,  \langle g_p \rangle$ be the corresponding epimorphism. Since $p \geq r^2|G|^2$ and $p$ is a good prime for $E/\mathbb{Q}(T)$, \cite{DG12} provides some integer $\theta_p$ such that, for each integer $t \equiv \theta_p \, \, \, {\rm{mod}} \, \, \, p$, $t$ is not a branch point and the specialization $(E\mathbb{Q}_p)_t/\mathbb{Q}_p$ corresponds to the epimorphism $\varphi$.

For each prime $p \in {\mathcal{S}}_{\rm{ra}}$, addendum \ref{Beckmann4} provides some integer ${\theta'_p}$ such that, for every integer $t$ satisfying $t \equiv {\theta'_p} \, \,  {\rm{mod}} \, \, p^{a_p+1}$ and $t \not \in \{t_1,\dots,t_r\}$, the inertia group at $p$ of the specialization $E_{t}/\mathbb{Q}$ is generated by some element of $C_{i_p}^{a_p}$.

Next use the chinese remainder theorem to find some integer $\theta$ satisfying $\theta \equiv \theta_p \,  \, {\rm{mod}} \, \, p$ for each prime $p \in \mathcal{S}_{\rm{ur}}$ and $\theta\equiv {\theta'_p} \, \,   {\rm{mod}} \, \, p^{a_p+1}$ for each prime $p \in \mathcal{S}_{\rm{ra}}$. Then, for every integer $t_0$ such that $t_0 \equiv \theta \,  \, {\rm{mod}} \, \, (\prod_{p \in \mathcal{S}_{\rm{ur}}} p \cdot \prod_{p \in \mathcal{S}_{\rm{ra}}} p^{a_p+1})$, $t_0$ is not a branch point and the specialization $E_{t_0}/\mathbb{Q}$ of $E/\mathbb{Q}(T)$ at $t_0$ satisfies conditions (2) and (3).

Finally, for such a specialization point $t_0$, one has ${\rm{Gal}}(E_{t_0}/\mathbb{Q})=G$ according to our g-complete hypothesis, thus ending the proof.

\subsection{Proof of theorem \ref{Beckmann4}} 
We first show theorem \ref{Beckmann4} under the extra assumption that the set $S$ from addendum \ref{Beckmann4} satisfies $S=\{1,\dots,s\}$ ($\S$3.4.1) and next consider the case $S \not=\{1,\dots,s\}$ ($\S$3.4.2). 

For simplicity, denote in this subsection the irreducible polynomials over $k$ of $t_{i_1}, \dots,t_{i_s}$ (resp. of $1/t_{i_1}, \dots, 1/t_{i_s}$) by $m_{i_1}(T), \dots,m_{i_s}(T)$ (resp. by $m_{i_1}^*(T), \dots,m_{i_s}^*(T)$) respectively.

\subsubsection{First case: ${S}=\{1,\dots,s\}$} The main part of the proof consists in showing that there is an element $\theta \in A$ (not depending on $j$) such that $v_{\mathcal{P}_j}(m_{i_j}(\theta)) = a_j$ for each index $j \in \{1,\dots,s\}$. Indeed, for such a $\theta$, fix $u \in \bigcap_{l=1}^s A_{\mathcal{P}_l}$ such that $t_{0,u}= \theta + u \, \prod_{l=1}^s x_{\mathcal{P}_l}^{a_l+1}$ is not a branch point. For each index $j \in \{1,\dots,s\}$, one then has $v_{\mathcal{P}_j}(m_{i_j}(t_{0,u})) = a_j$ (remark \ref{rem div}), {\it{i.e.}} $I_{\mathcal{P}_j}(t_{0,u}, t_{i_j})=a_j$. Next apply part (1) of lemma \ref{lemme} and part (2) of the Specialization Inertia Theorem to conclude.

According to our assumptions, $\mathcal{P}_j$ is a prime divisor of $m_{i_j}(T)$ or of $m_{i_j}^*(T)$ for each index $j \in \{1,\dots,s\}$. In fact, from lemma \ref{drop} below, one may drop the polynomials $m_{i_1}^*(T), \dots, m_{i_s}^*(T)$.

\begin{lemma} \label{drop}
For each $j \in \{1,\dots,s\}$, $\mathcal{P}_j$ is a prime divisor of $m_{i_j}(T)$.
\end{lemma}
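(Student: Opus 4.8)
The plan is to first reduce the hypothesis to a statement about a single factor, and then to transfer that statement across the substitution $T \mapsto 1/T$ by passing to the residue field. Since $\mathcal{P}_j$ unitizes $t_{i_j}$ and $t_{i_j} \not\in \{0,\infty\}$ (recall we are in the case $S=\{1,\dots,s\}$), both $m_{i_j}(T)$ and $m_{i_j}^*(T)$ are \emph{monic} polynomials lying in $A_{\mathcal{P}_j}[T]$, and their constant coefficients $a_{t_{i_j}}$ and $1/a_{t_{i_j}}$ are units of $A_{\mathcal{P}_j}$; indeed every $k$-conjugate of $t_{i_j}$ is a unit for any prolongation of $v_{\mathcal{P}_j}$, so $v_{\mathcal{P}_j}(a_{t_{i_j}})=0$. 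Now $\mathcal{P}_j$ being a prime divisor of the product $m_{i_j}(T)\cdot m_{i_j}^*(T)$ means there is some $t \in k$ with $v_{\mathcal{P}_j}(m_{i_j}(t)\, m_{i_j}^*(t)) > 0$. As the product is monic and lies in $A_{\mathcal{P}_j}[T]$, statement $(*)$ forces $v_{\mathcal{P}_j}(t) \geq 0$, whence $v_{\mathcal{P}_j}(m_{i_j}(t)) \geq 0$ and $v_{\mathcal{P}_j}(m_{i_j}^*(t)) \geq 0$ with at least one of the two $>0$. Thus $\mathcal{P}_j$ is a prime divisor of $m_{i_j}(T)$ or of $m_{i_j}^*(T)$; in the first case there is nothing to prove, so I may assume the second.

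Next, fix $t \in k$ with $v_{\mathcal{P}_j}(m_{i_j}^*(t)) > 0$; statement $(*)$ again gives $v_{\mathcal{P}_j}(t) \geq 0$, so $t$ has a well-defined image $\overline{t}$ in the residue field $\kappa_j = A_{\mathcal{P}_j}/\mathcal{P}_j A_{\mathcal{P}_j}$, and the reduction $\overline{m_{i_j}^*}$ vanishes at $\overline{t}$. Because the constant coefficient of $m_{i_j}^*$ is a unit, one has $\overline{m_{i_j}^*}(0) \neq 0$, hence $\overline{t} \neq 0$, and $1/\overline{t}$ makes sense in the field $\kappa_j$. I would then exploit the identity $m_{i_j}^*(T) = a_{t_{i_j}}^{-1}\, T^{n}\, m_{i_j}(1/T)$, with $n=\deg(m_{i_j})$, recorded in \S2.2.1: reducing modulo $\mathcal{P}_j$ and evaluating at $\overline{t} \neq 0$ gives $0 = \overline{a_{t_{i_j}}}^{\,-1}\, \overline{t}^{\,n}\, \overline{m_{i_j}}(1/\overline{t})$, so $\overline{m_{i_j}}(1/\overline{t}) = 0$. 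Finally, since the reduction map $A_{\mathcal{P}_j} \to \kappa_j$ is surjective, I can lift $1/\overline{t}$ to some $s \in A_{\mathcal{P}_j} \subseteq k$; then $v_{\mathcal{P}_j}(m_{i_j}(s)) > 0$, which is exactly the assertion that $\mathcal{P}_j$ is a prime divisor of $m_{i_j}(T)$.

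The only genuinely delicate points are the bookkeeping around units: one must use the unitizing hypothesis both to guarantee that $m_{i_j}^*(T)$ actually has coefficients in $A_{\mathcal{P}_j}$ (so that reduction modulo $\mathcal{P}_j$ is meaningful) and that its constant coefficient is a unit (so that the root $\overline{t}$ is nonzero and can be inverted in $\kappa_j$). Everything else is the routine passage between a $k$-rational prime divisor and a root of the reduced polynomial in the residue field, together with the elementary reverse-polynomial identity relating $m_{i_j}$ and $m_{i_j}^*$; beyond the unit bookkeeping I expect no obstacle.
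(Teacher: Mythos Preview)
Your argument is correct and follows essentially the same route as the paper: both proofs use the reverse-polynomial identity $m_{i_j}^*(T)=a_{t_{i_j}}^{-1}T^{n}m_{i_j}(1/T)$ together with the unitizing hypothesis to pass from a root of $m_{i_j}^*$ modulo $\mathcal{P}_j$ to one of $m_{i_j}$. Two small remarks: (i) the assertion $t_{i_j}\neq 0$ does not follow from $S=\{1,\dots,s\}$ (which only gives $t_{i_j}\neq\infty$); the paper instead observes that if $t_{i_j}=0$ then $m_{i_j}^*(T)=1$, so this case is vacuous; (ii) your detour through the residue field and the lift of $1/\overline{t}$ is unnecessary, since $t$ is already a unit of $A_{\mathcal{P}_j}$ and one may take $s=1/t$ directly, as the paper does.
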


\begin{proof}
Indeed, if $\mathcal{P}_j$ is a prime divisor of $m_{i_j}^*(T)$ for some index $j$, then there exists some element $t \in A_{\mathcal{P}_j}$ such that $m_{i_j}^*(t) \in \mathcal{P}_j A_{\mathcal{P}_j}$. In particular, one has $t_{i_j} \not=0$ (otherwise $1 = m_{i_j}^*(t) \in \mathcal{P}_jA_{\mathcal{P}_j}$). Since $\mathcal{P}_j$ unitizes $t_{i_j}$, the constant coefficient $a_0$ of $m_{i_j}(T)$ satisfies $v_{\mathcal{P}_j}(a_0)=0$ and, from $t_{i_j} \not= \infty$, one has $t \not \in \mathcal{P}_j A_{\mathcal{P}_j}$. Hence, from $m_{i_j}^*(t)=(1/a_0) \, t^n \, m_{i_j}(1/t)$ (with $n={\rm{deg}}(m_{i_j}(T))$), one has $m_{i_j}(1/t) \in \mathcal{P}_j A_{\mathcal{P}_j}$, {\it{i.e.}} $\mathcal{P}_j$ is a prime divisor of $m_{i_j}(T)$.
\end{proof}

\begin{remark} \label{drop2}
In particular, lemma \ref{drop} shows that, if $\infty$ is not a branch point, then the two polynomials $m_{\bf{\underline{t}}}(T)$ and $m_{\bf{\underline{t}}}(T) \cdot m_{1/\bf{\underline{t}}}(T)$ have the same prime divisors (up to finitely many).
\end{remark}

For each index $j \in \{1,\dots,s\}$, pick an element $\theta_j \in A_{\mathcal{P}_j}$ such that $v_{\mathcal{P}_j}(m_{i_j}(\theta_j))>0$. The core of the construction consists in replacing the $s$-tuple $(\theta_1,\dots,\theta_s)$ by some suitable $s$-tuple $(\theta'_1,\dots,\theta'_s)$ satisfying $v_{\mathcal{P}_j}(m_{i_j}(\theta'_{j}))=a_j$ for each index $j \in \{1,\dots,s\}$.

\begin{lemma} \label{Taylor}
Let $j \in \{1,\dots,s\}$ and $d$ be a positive integer. Then there exists an element $\theta_{j,d} \in A_{\mathcal{P}_j}$ such that $v_{\mathcal{P}_j}(m_{i_j}(\theta_{j,d}))=d$.
\end{lemma}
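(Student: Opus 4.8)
The plan is to produce an element with prescribed intersection multiplicity $d$ at a single prime $\mathcal{P}_j$ by starting from a ``crude'' root $\theta_j$ where $v_{\mathcal{P}_j}(m_{i_j}(\theta_j)) > 0$ and then iteratively refining it. The governing tool is Taylor's formula: for $a \in \mathcal{P}_j A_{\mathcal{P}_j}$,
$$m_{i_j}(\theta_j + a) = m_{i_j}(\theta_j) + a\, m_{i_j}'(\theta_j) + a^2\, R$$
with $R \in A_{\mathcal{P}_j}$ (this is exactly the expansion already used in the proof of proposition \ref{transition}). The dichotomy is whether the valuation we currently have is too small or too large relative to $d$.

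First I would observe that lemma \ref{derivee} gives us crucial control on the derivative term. Since $\mathcal{P}_j$ is a good prime for $E/k(T)$ and $m_{i_j}(T) \in A_{\mathcal{P}_j}[T]$ with $v_{\mathcal{P}_j}(m_{i_j}(\theta_j)) > 0$, lemma \ref{derivee} forbids $v_{\mathcal{P}_j}(m_{i_j}'(\theta_j)) > 0$ from also holding (that would force $\mathcal{P}_j$ to be a bad prime). Hence $v_{\mathcal{P}_j}(m_{i_j}'(\theta_j)) = 0$, i.e.\ $m_{i_j}'(\theta_j)$ is a unit at $\mathcal{P}_j$. This is the key structural fact that makes the refinement work, and I would establish it at the outset.

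Next I would treat the two cases. If $v_{\mathcal{P}_j}(m_{i_j}(\theta_j)) \geq d$, I want to \emph{lower} the valuation to exactly $d$: choosing $a$ with $v_{\mathcal{P}_j}(a) = d$ (e.g.\ $a = x_{\mathcal{P}_j}^d$), the derivative term has valuation exactly $v_{\mathcal{P}_j}(a) + 0 = d$, while $m_{i_j}(\theta_j)$ has valuation $\geq d$ and $a^2 R$ has valuation $\geq 2d > d$; the strictly dominant term is then $a\,m_{i_j}'(\theta_j)$ unless the value $m_{i_j}(\theta_j)$ exactly cancels it, so I would pick the unit part of $a$ to avoid cancellation, giving $v_{\mathcal{P}_j}(m_{i_j}(\theta_j + a)) = d$ and setting $\theta_{j,d} = \theta_j + a$. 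If instead $v_{\mathcal{P}_j}(m_{i_j}(\theta_j)) < d$, I would \emph{raise} the valuation by one step at a time: write $v_{\mathcal{P}_j}(m_{i_j}(\theta_j)) = c$ and choose $a$ with $v_{\mathcal{P}_j}(a) = c$ so that the linear term $a\, m_{i_j}'(\theta_j)$ also has valuation $c$; by adjusting the unit part of $a$ to cancel the leading $\mathcal{P}_j$-adic digit of $m_{i_j}(\theta_j)$, I force $v_{\mathcal{P}_j}(m_{i_j}(\theta_j + a)) \geq c+1$ while $a^2 R$ has valuation $\geq 2c \geq c+1$, so the refinement is consistent. Iterating this increment raises the valuation through $c, c+1, \dots$ until it reaches or exceeds $d$, at which point the first case finishes the job.

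The main obstacle will be the bookkeeping in the ``raising'' step: ensuring that the single Newton-type adjustment genuinely increases the valuation rather than merely preserving it, and that the quadratic remainder $a^2 R$ never interferes. Both issues are controlled precisely because $m_{i_j}'(\theta_j)$ is a unit (from lemma \ref{derivee}), which guarantees a unique choice of unit coefficient in $a$ solving the relevant linear congruence at each stage; this is the standard Hensel/Newton lifting mechanism, so the iteration terminates and produces the desired $\theta_{j,d}$ with $v_{\mathcal{P}_j}(m_{i_j}(\theta_{j,d})) = d$ exactly.
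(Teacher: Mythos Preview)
Your proposal is correct and rests on the same two pillars as the paper's proof: the Taylor expansion of $m_{i_j}$ around an approximate root, and lemma~\ref{derivee} to guarantee that $m_{i_j}'$ is a $\mathcal{P}_j$-adic unit at every stage.  The organization differs slightly: the paper proceeds by strict induction on $d$ (build $\theta_{j,1}$, then $\theta_{j,2}$, \dots), which forces a separate case analysis at the step $d=1\to d=2$ because there the quadratic remainder $u^2R$ has valuation $2d=d+1$ and may collide with the target; your raise-then-lower scheme sidesteps this, since you never need the raised valuation to land \emph{exactly} at $c+1$---overshooting is fine because the final lowering step (add $x_{\mathcal{P}_j}^d$) always hits $d$ on the nose.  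One small point to make explicit when you write it out: after each refinement the derivative at the \emph{new} point is still a unit (either reinvoke lemma~\ref{derivee} there, or note that $m_{i_j}'(\theta+a)\equiv m_{i_j}'(\theta)\pmod{\mathcal{P}_j}$), since your text currently only asserts this at the initial $\theta_j$.
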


\begin{proof}
We show lemma \ref{Taylor} by induction. If $v_{\mathcal{P}_j}(m_{i_j}(\theta_j))=1$, one can obviously take $\theta_{j,1} = \theta_j$. Otherwise, as noted in the last paragraph of the proof of proposition \ref{transition}, one can take $\theta_{j,1} = \theta_j + x_{\mathcal{P}_j} \in A_{\mathcal{P}_j}$.

We now explain how to produce an element $\theta_{j,2} \in A_{\mathcal{P}_j}$. From lemma \ref{derivee}, one has $v_{\mathcal{P}_j}(m'_{i_j}(\theta_{j,1}))=0$ and then $m'_{i_j}(\theta_{j,1}) \not =0$. First assume that $(1/2)m''_{i_j}(\theta_{j,1}) \in A_{\mathcal{P}_j} \smallsetminus \mathcal{P}_j A_{\mathcal{P}_j}$ and set $u= -(m_{i_j}(\theta_{j,1})/m'_{i_j}(\theta_{j,1})) + x^{3}_{\mathcal{P}_j}$. Taylor's formula yields
$$ m_{i_j}(\theta_{j,1} + u)=x_{\mathcal{P}_j}^{3} m'_{i_j}(\theta_{j,1}) + (1/2)u^2 m''_{i_j}(\theta_{j,1}) + u^3 R_j$$
with $R_j \in A_{\mathcal{P}_j}$. Hence one can take $\theta_{j,2} = \theta_{j,1} \, + \, u$ (this is an element of $A_{\mathcal{P}_j}$ since $v_{\mathcal{P}_j}(u)=1$) since one has $v_{\mathcal{P}_j}(x_{\mathcal{P}_j}^{3} m'_{i_j}(\theta_{j,1}))= 3$, $v_{\mathcal{P}_j}((1/2)u^2 m''_{i_j}(\theta_{j,1}))=2$ and $v_{\mathcal{P}_j}(u^3 R_j) \geq 3$. Now assume that  $v_{\mathcal{P}_j}((1/2)m''_{i_j}(\theta_{j,1})) \geq 1$ and set $\widetilde{u}= -(m_{i_j}(\theta_{j,1})/m'_{i_j}(\theta_{j,1}))$ $+ \, x^{2}_{\mathcal{P}_j}$. Taylor's formula yields $$ m_{i_j}(\theta_{j,1} + \widetilde{u})=x_{\mathcal{P}_j}^{2} m'_{i_j}(\theta_{j,1}) + (1/2)\widetilde{u}^2 m''_{i_j}(\theta_{j,1}) + \widetilde{u}^3 R_j$$
with $R_j \in A_{\mathcal{P}_j}$. Then one can take $\theta_{j,2} = \theta_{j,1} \, + \, \widetilde{u}$ (this is an element of $A_{\mathcal{P}_j}$ since $v_{\mathcal{P}_j}(\widetilde{u})=1$) since one has $v_{\mathcal{P}_j}(x_{\mathcal{P}_j}^{2} m'_{i_j}(\theta_{j,1}))= 2$, $v_{\mathcal{P}_j}((1/2)\widetilde{u}^2 m''_{i_j}(\theta_{j,1})) \geq 3$ and $v_{\mathcal{P}_j}(\widetilde{u}^3 R_j) \geq 3$.

Now fix an integer $d \geq 2$ and assume that an element $\theta_{j,d} \in A_{\mathcal{P}_j}$ has been constructed. We produce below an element $\theta_{j,d+1} \in A_{\mathcal{P}_j}$. As before, one has $v_{\mathcal{P}_j}(m'_{i_j}(\theta_{j,d}))=0$ and then $m'_{i_j}(\theta_{j,d}) \not =0$. Set $u= -(m_{i_j}(\theta_{j,d})/m'_{i_j}(\theta_{j,d})) + x^{d+1}_{\mathcal{P}_j}$. Taylor's formula yields
$$ m_{i_j}(\theta_{j,d} + u)=x_{\mathcal{P}_j}^{d+1} m'_{i_j}(\theta_{j,d}) + u^2 R_j$$
with $R_j \in A_{\mathcal{P}_j}$. Then one can take $\theta_{j,d+1} = \theta_{j,d}+u$ (this is an element of $A_{\mathcal{P}_j}$ since $v_{\mathcal{P}_j}(u)=d$) since one has $v_{\mathcal{P}_j}(x_{\mathcal{P}_j}^{d+1} m'_{i_j}(\theta_{j,d}))= d+1$ and $v_{\mathcal{P}_j}(u^2 R_j) \geq 2d > d+1$ ($d \geq 2$).
\end{proof}

For each $j \in \{1,\dots,s\}$, fix $\theta'_j \in A_{\mathcal{P}_j}$ such that $v_{\mathcal{P}_j}(m_{i_j}(\theta'_j))=a_j$. From the chinese remainder theorem, there are infinitely many distinct $\theta \in A$ such that $\theta - \theta'_j \in \mathcal{P}_j^{a_j+1} A_{\mathcal{P}_j}$ for each $j \in \{1,\dots,s\}$. Hence, for such a $\theta$, remark \ref{rem div} shows that one has $v_{\mathcal{P}_j}(m_{i_j}(\theta))=a_j$ for each $j \in \{1,\dots,s\}$, thus ending the proof in the case $S=\{1,\dots,s\}$.

\subsubsection{Second case: $S \not= \{1,\dots,s\}$} The proof of lemma \ref{drop} shows that the prime $\mathcal{P}_j$ is a prime divisor of $m_{i_j}(T)$ for each index $j \in S$. Next use lemma \ref{Taylor} to pick a $|S|$-tuple $(\theta_j)_{j \in S} \in \prod_{j \in S} A_{\mathcal{P}_j}$ satisfying $v_{\mathcal{P}_j}(m_{i_j}(\theta_j))=a_j$ for each index $j \in S$. Let $S^*=\{1,\dots,s\} \smallsetminus S$, {\it{i.e.}} $S^*$ is the set of all the indices $j \in \{1,\dots,s\}$ such that $t_{i_j} = \infty$. For each index $j \in S^*$, denote $x_{\mathcal{P}_j}^{a_j}$ by $\theta^*_j$. 

From the Artin-Whaples theorem ({\it{e.g.}} \cite[chapter XII, theorem 1.2]{Lan02}), there exists some element $\theta \in k$ satisfying these two conditions:

\vspace{0.5mm}

\noindent
(i) $v_{\mathcal{P}_j} (\theta - \theta_j) \geq a_j+1$ (and so $v_{\mathcal{P}_j}(\theta) \geq 0$) for each index $j \in {S}$,

\vspace{0.5mm}

\noindent
(ii) $v_{\mathcal{P}_j} (\theta - (1 /\theta^*_j)) \geq a_j+1$ (and so $v_{\mathcal{P}_j}(\theta) < 0$) for each index $j \in {S}^*$.

\vspace{0.5mm}

\noindent
Fix an element $u \in \bigcap_{l=1}^s A_{\mathcal{P}_l}$ such that $t_{0,u} = \theta + u \, \prod_{l \in {S}} x_{\mathcal{P}_l}^{a_l+1}$ is not a branch point. We show below that $I_{\mathcal{P}_j}(t_{0,u},t_{i_j})=a_j$ for each index $j \in \{1,\dots,s\}$. As in $\S$3.4.1, it then remains to apply part (1) of lemma \ref{lemme} and part (2) of the Specialization Inertia Theorem to conclude.

Let $j \in {S}$. Since $v_{\mathcal{P}_j} (t_{0,u}) \geq 0$, one has $I_{\mathcal{P}_j}(t_{0,u},t_{i_j}) = v_{\mathcal{P}_j}(m_{i_j}(t_{0,u}))$ and, as in the case ${S} = \{1,\dots,s\}$, one has $v_{\mathcal{P}_j}(m_{i_j}(t_{0,u}))=a_j$.

Let $j \in {S}^*$. Since $t_{i_j} = \infty$ and $v_{\mathcal{P}_j}(t_{0,u}) = v_{\mathcal{P}_j}(\theta) < 0$, one has $I_{\mathcal{P}_j}(t_{0,u},t_{i_j})= v_{\mathcal{P}_j}(1/t_{0,u}) = v_{\mathcal{P}_j}(1/\theta)$. But $v_{\mathcal{P}_j}(\theta^*_j) = a_j$ and $v_{\mathcal{P}_j}((1/\theta) - \theta^*_j) = v_{\mathcal{P}_j}((1/\theta^*_j) - \theta) - v_{\mathcal{P}_j}(\theta) + v_{\mathcal{P}_j}(\theta^*_j) \geq a_j+1$. Hence $v_{\mathcal{P}_j}(1/\theta)=a_j$.

\section{Non parametric extensions over number fields}

This section is devoted to non parametric extensions. We first recall some basics in $\S$4.1. Section 4.2 is devoted to theorem \ref{methode} which is our main result to give examples of such extensions. We next apply it to some classical Galois extensions in $\S$4.3.

\subsection{Basics} Let $k$ be a field and $G$ a finite group.

\begin{definition} \label{ext st para}
Let $E/k(T)$ be a Galois extension of group $G$ with $E/k$ regular and $\{t_1,\dots,t_r\}$ its branch point set. We say that $E/k(T)$ is {\it{G-parametric over $k$}} if any Galois extension $F/k$ of group $G$ occurs as the specialization $E_{t_0}/k$ of $E/k(T)$ at $t_0$ for some $t_0 \in \mathbb{P}^1(k) \smallsetminus \{t_1,\dots,t_r\}$.
\end{definition}

The more general property for which the condition is required for any Galois extension $F/k$ of group a given subgroup $H \subset G$ is studied in \cite{Leg15}.

\vspace{3mm}

We briefly recall some known examples of $G$-parametric and non $G$-parametric extensions in the case $k=\mathbb{Q}$.

\vspace{2.5mm}

\noindent
(1) {\it{Positive examples}}. Let $G$ be a finite group. If $G$ is one of the groups $\{1\}$, $\mathbb{Z}/2\mathbb{Z}$, $\mathbb{Z}/3\mathbb{Z}$ and $S_3$, there exists at least one $G$-parametric extension over $\mathbb{Q}$. This comes from (lemma \ref{spec} and) the fact that these four groups (are the only ones to) have a {\it{one parameter generic polynomial over $\mathbb{Q}$}}, {\it{i.e.}} a monic (with respect to $X$) separable polynomial $P(T,X) \in \mathbb{Q}[T][X]$ of group $G$ such that, for any extension $L/\mathbb{Q}$, any Galois extension $F/L$ of group $G$ occurs as the splitting extension over $L$ of some separable polynomial $P(t_0,X)$ with $t_0 \in L$ \cite[page 194]{JLY02}. Here are some examples of $G$-parametric extensions over $\mathbb{Q}$ which are in fact provided by one parameter generic polynomials over $\mathbb{Q}$:

\vspace{0.5mm}

\noindent
(a) the extension $\mathbb{Q}(T)/\mathbb{Q}(T)$ is $\{1\}$-parametric over $\mathbb{Q}$,

\vspace{0.5mm}

\noindent
(b) the extension $\mathbb{Q}(\sqrt{T})/\mathbb{Q}(T)$ is $\mathbb{Z}/2\mathbb{Z}$-parametric over $\mathbb{Q}$,

\vspace{0.5mm}

\noindent
(c) the splitting extension over $\mathbb{Q}(T)$ of the polynomial $X^3-TX^2+(T-3)X+1$ is $\mathbb{Z}/3\mathbb{Z}$-parametric over $\mathbb{Q}$ \cite[page 30]{JLY02},

\vspace{0.5mm}

\noindent
(d) the splitting extension over $\mathbb{Q}(T)$ of the trinomial $X^3+TX+T$ is $S_3$-parametric over $\mathbb{Q}$ \cite[page 30]{JLY02}.

\vspace{0.5mm}

\noindent
If $G$ is none of the previous four groups, it seems unknown whether there exists such an extension or not.

\vspace{2.5mm}

\noindent
(2) {\it{Negative examples.}} In addition to the example with $G=\mathbb{Z}/2\mathbb{Z}$ from the introduction, only a few negative examples are known:

\vspace{0.5mm}

\noindent
(a) {\hbox{no Galois extension $E/\mathbb{Q}(T)$ of group $S_7$ with $E/\mathbb{Q}$ regular and}}
 branch point set $\{0, 1, \infty\}$ is $S_7$-parametric over $\mathbb{Q}$ \cite[example 1.1]{Bec94},

\vspace{0.75mm}

\noindent
(b) for every finite group $G \not= (\mathbb{Z}/2\mathbb{Z})^2$, $S_3$, $D_4$, $D_6$ which occurs as the Galois group of a totally real Galois extension of $\mathbb{Q}$, no Galois extension $E/\mathbb{Q}(T)$ of group $G$ with $E/\mathbb{Q}$ regular and three branch points is $G$-parametric over $\mathbb{Q}$ \cite[proposition 1.2]{DF90}.

\subsection{Criteria for non parametricity} Let $k$ be a number field, $A$ the integral closure of $\mathbb{Z}$ in $k$, $G$ a finite group and $E_1/k(T)$, $E_2/k(T)$ two Galois extensions of group $G$ with $E_1/k$ and $E_2/k$ regular. For each index $i \in \{1,2\}$, denote the product of the two polynomials introduced in $\S$2.3 from the branch points of $E_i/k(T)$ by $m_{{\bf{\underline{t}}},i}(T) \cdot m_{1/{\bf{\underline{t}}},i}(T)$ and the inertia canonical invariant of $E_i/k(T)$ by $(C_{1,i},\dots,C_{r_i,i})$.

\subsubsection{Statement of the result} Consider the following two conditions:

\vspace{1mm}

\noindent
(Branch Point Hypothesis) {\it{there exist infinitely many distinct primes of $A$ which each is a prime divisor of $m_{{\bf{\underline{t}}},1}(T)\cdot m_{1/{\bf{\underline{t}}},1}(T)$ but not of $m_{{\bf{\underline{t}}},2}(T)\cdot m_{1/{\bf{\underline{t}}},2}(T)$}},

\vspace{1mm}

\noindent
(Inertia Hypothesis) {\it{$\{C_{1,1},\dots,C_{r_1,1}\} \not \subset \{C_{1,2}^{a}, \dots, C_{r_2,2}^{a} \,  / \, a \in \mathbb{N}\}$.}}

\begin{theorem} \label{methode}
Under either one of these two conditions, the following non $G$-parametricity condition holds:

\vspace{1mm}

\noindent
{\rm{(non $G$-parametricity)}} there are infinitely many linearly disjoint Galois extensions of $k$ of group $G$ which are not specializations of $E_2/k(T)$.

\vspace{1mm}

\noindent
In particular, the extension $E_2/k(T)$ is not $G$-parametric over $k$. Moreover these Galois extensions of $k$ of group $G$ may be obtained by specializing $E_1/k(T)$.
\end{theorem}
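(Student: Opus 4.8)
The plan is to exhibit, in either case, a single prime $\mathcal{P}$ of $A$ together with a local condition at $\mathcal{P}$ which infinitely many linearly disjoint specializations of $E_1/k(T)$ can be forced to satisfy, via corollary \ref{Hilbert}, but which no specialization of $E_2/k(T)$ can satisfy. Since $k$ is a number field it is hilbertian, so corollary \ref{Hilbert} applies to $E_1/k(T)$ and directly supplies infinitely many linearly disjoint Galois extensions of $k$ of group $G$ of the form $E_{1,t_0}/k$; the whole point is to arrange that none of them is a specialization of $E_2/k(T)$. Before starting I discard the finitely many primes of $A$ that are bad for $E_1/k(T)$ or for $E_2/k(T)$, or that fail to unitize some branch point of either extension, so that both corollary \ref{transition2} and part (2) of the Specialization Inertia Theorem are available at each of the remaining primes for both extensions.

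\textbf{Branch Point Hypothesis.} Among the infinitely many primes dividing $m_{{\bf{\underline{t}}},1}(T)\cdot m_{1/{\bf{\underline{t}}},1}(T)$ but not $m_{{\bf{\underline{t}}},2}(T)\cdot m_{1/{\bf{\underline{t}}},2}(T)$ I pick one, $\mathcal{P}$, among those retained above. As a prime divisor of the first product, $\mathcal{P}$ is a prime divisor of $m_{t_{i_1}}(T)\cdot m_{1/t_{i_1}}(T)$ for some branch point $t_{i_1}$ of $E_1/k(T)$; applying corollary \ref{Hilbert} to $E_1/k(T)$ with the single prime $\mathcal{P}$ and the couple $(i_1,1)$ yields infinitely many linearly disjoint specializations $E_{1,t_0}/k$ of group $G$ whose inertia group at $\mathcal{P}$ is generated by an element of $C_{i_1,1}$, hence nontrivial (the ramification index $e_{i_1}>1$), so that $\mathcal{P}$ ramifies in each of them. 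On the other hand $\mathcal{P}$ is good for $E_2/k(T)$, unitizes its branch points and is not a prime divisor of $m_{{\bf{\underline{t}}},2}(T)\cdot m_{1/{\bf{\underline{t}}},2}(T)$, so corollary \ref{transition2} shows that $\mathcal{P}$ ramifies in no specialization of $E_2/k(T)$. Therefore none of the extensions $E_{1,t_0}/k$ is a specialization of $E_2/k(T)$, which is exactly the asserted non-$G$-parametricity.

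\textbf{Inertia Hypothesis.} Fix an index $i_1$ with $C_{i_1,1}\notin\{C_{1,2}^{a}, \dots, C_{r_2,2}^{a} \, / \, a \in \mathbb{N}\}$. By the Tchebotarev argument used in the proof of corollary \ref{gc}, $m_{t_{i_1}}(T)\cdot m_{1/t_{i_1}}(T)$ has infinitely many prime divisors, so I may choose one, $\mathcal{P}$, among the primes retained above. Corollary \ref{Hilbert} applied to $E_1/k(T)$ with $\mathcal{P}$ and the couple $(i_1,1)$ again produces infinitely many linearly disjoint $E_{1,t_0}/k$ of group $G$ whose inertia group at $\mathcal{P}$ is generated by an element of $C_{i_1,1}$. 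Suppose one such $E_{1,t_0}/k$ coincided with a specialization $E_{2,t'_0}/k$. If $\mathcal{P}$ were unramified in it, its inertia class at $\mathcal{P}$ would be trivial, which is impossible since $C_{i_1,1}$ is nontrivial. Otherwise, $\mathcal{P}$ being good for $E_2$ and unitizing its branch points, part (2) of the Specialization Inertia Theorem (applied to both extensions) identifies the distinguished tame-inertia generator at $\mathcal{P}$ of the single extension $E_{1,t_0}/k=E_{2,t'_0}/k$ simultaneously as an element of $C_{i_1,1}$ and as $g_j^{I_\mathcal{P}(t'_0,t_j)}\in C_{j,2}^{I_\mathcal{P}(t'_0,t_j)}$ for some branch point $t_j$ of $E_2$ with $I_\mathcal{P}(t'_0,t_j)\ge 1$; hence $C_{i_1,1}=C_{j,2}^{I_\mathcal{P}(t'_0,t_j)}$, contradicting the choice of $i_1$. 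Thus no $E_{1,t_0}/k$ is a specialization of $E_2/k(T)$.

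The subtle point, and the one I would be most careful about, is the comparison in the Inertia case: I must use that the conjugacy class of the distinguished generator of the (tame) inertia group of an abstract Galois extension $F/k$ at $\mathcal{P}$ is an invariant of $F/k$ itself, independent of any branch-point geometry realizing $F$ as a specialization. It is precisely this intrinsic class -- cyclic tame inertia carrying a canonical generator determined by the single coherent system $\{\zeta_n\}$ fixed in \S2.1.2 -- that part (2) of the Specialization Inertia Theorem computes, so that the value $C_{i_1,1}$ read off from the $E_1$-side must agree with a value $C_{j,2}^{a}$ read off from the $E_2$-side. Matching merely ``some generator'' of the inertia group would be insufficient, since a cyclic group has generators in several conjugacy classes; it is the canonical generator that makes the two computations comparable. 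Granting this, both cases are straightforward applications of corollary \ref{Hilbert} together with corollary \ref{transition2} (Branch Point case) or the Specialization Inertia Theorem (Inertia case), the former avoiding the subtlety altogether by comparing only ramified versus unramified behavior.
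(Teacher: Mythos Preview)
Your argument is correct and follows essentially the same route as the paper: in both cases you pick a suitable prime $\mathcal{P}$, use corollary \ref{Hilbert} on $E_1/k(T)$ to produce infinitely many linearly disjoint specializations of group $G$ with prescribed inertia at $\mathcal{P}$, and then rule them out as specializations of $E_2/k(T)$ via corollary \ref{transition2} (Branch Point case) or the Specialization Inertia Theorem (Inertia case).

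One remark on your final paragraph. Your worry about needing a \emph{canonical} generator of the tame inertia group is unnecessary here, and the Specialization Inertia Theorem as stated in the paper only asserts that the inertia \emph{group} at $\mathcal{P}$ is conjugate to $\langle g_j^{I_\mathcal{P}(t'_0,t_j)}\rangle$, not that a distinguished generator lies in $C_{j,2}^{I_\mathcal{P}(t'_0,t_j)}$. The point is that the Inertia Hypothesis excludes $C_{i_1,1}$ from $\{C_{j,2}^{a}\,/\,a\in\mathbb{N}\}$ for \emph{all} positive integers $a$. So if the cyclic inertia group $I$ at $\mathcal{P}$ is generated both by some $g\in C_{i_1,1}$ and by some $h\in C_{j,2}^{a}$, then $g=h^{b}$ for some $b$, whence $g\in C_{j,2}^{ab}$ and $C_{i_1,1}=C_{j,2}^{ab}$, already a contradiction. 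Matching ``some generator'' is therefore enough; you do not need to invoke a canonical tame generator, and indeed the paper does not.
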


\begin{remark} \label{rem geo}
As the Inertia Hypothesis does not depend on the base field $k$ \footnote{This is obviously false for the Branch Point Hypothesis.}, one obtains, under the Inertia Hypothesis, the following geometric non $G$-parametricity condition\footnote{As in theorem \ref{methode}, one may add that the Galois extensions of group $G$ whose existence is claimed may be obtained by specialization.}:

\vspace{1mm}

\noindent
{\rm{(geometric non $G$-parametricity)}} {\it{for any number field $k'$ containing $k$, there are infinitely many linearly disjoint Galois extensions of $k'$ of group $G$ which each is not a specialization of $E_2k'/k'(T)$.}}
\end{remark}

For simplicity, we have only considered the number field case. At the cost of some technical adjustments, similar criteria also hold for more general base fields. This is explained in \cite{Leg15}.

\subsubsection{Proof of theorem \ref{methode}} Denote the branch point set of $E_1/k(T)$ by $\{t_{1,1},\dots,t_{r_1,1}\}$ and, for each index $l \in \{1,\dots,r_1\}$, the irreducible polynomial of $t_{l,1}$ (resp. of $1/t_{l,1}$) over $k$ by $m_{l,1}(T)$ (resp. by $m_{l,1}^*(T)$).

First assume that the Branch Point Hypothesis holds. Then there exists some index $l \in \{1,\dots,r_1\}$ such that the polynomial $m_{l,1}(T) \cdot m_{l,1}^*(T)$ has infinitely many distinct prime divisors $\mathcal{P}$ which each is not a prime divisor of $m_{{\bf{\underline{t}}},2}(T)\cdot m_{1/{\bf{\underline{t}}},2}(T)$. Furthermore, up to excluding finitely many of these primes, one may also assume that such a prime $\mathcal{P}$ satisfies the following two conditions:

\vspace{0.5mm}

\noindent
(i) $\mathcal{P}$ is a good prime for $E_1/k(T)$ unitizing $t_{l,1}$,

\vspace{0.5mm}

\noindent
(ii) $\mathcal{P}$ is a good prime for $E_2/k(T)$ unitizing each of its branch points.

\vspace{0.5mm}

\noindent
For such a prime $\mathcal{P}$, apply corollary \ref{Hilbert} to construct infinitely many linearly disjoint specializations $F_\mathcal{P}/k$ of $E_1/k(T)$ of group $G$ which each ramifies at $\mathcal{P}$. From corollary \ref{transition2}, such a specialization $F_\mathcal{P}/k$ is not a specialization of $E_2/k(T)$ and the conclusion follows.

Now assume that the Inertia Hypothesis holds. Fix an index $l \in \{1,\dots,r_1\}$ such that $C_{l,1}$ is not in the set $\{C_{1,2}^{a},\dots,C_{r_2,2}^a \,  /  \, a \in \mathbb{N}\}$. From the Tchebotarev density theorem, $m_{l,1}(T) \cdot m_{l,1}^*(T)$ has infinitely many distinct prime divisors $\mathcal{P}$ which may be assumed as before to further satisfy conditions (i) and (ii) above. For such a $\mathcal{P}$, apply corollary \ref{Hilbert} to construct infinitely many linearly disjoint specializations $F_\mathcal{P}/k$ of $E_1/k(T)$ of group $G$ whose inertia group at $\mathcal{P}$ is generated by some element of $C_{l,1}$. If such a specialization $F_\mathcal{P}/k$ is a specialization of $E_2/k(T)$, then, from the Specialization Inertia Theorem, there exist some index $j \in \{1,\dots,r_2\}$ and some positive integer $a$ such that the inertia group of $F_\mathcal{P}/k$ at $\mathcal{P}$ is generated by some element of $C_{j,2}^a$; a contradiction. Hence the conclusion follows.

\subsection{Examples} Given a number field $k$, we now use theorem \ref{methode} to give some new examples of non $G$-parametric extensions over $k$. Our method consists in comparing the ramification data (branch points and inertia canonical invariants) of two Galois extensions of $k(T)$ with the same group $G$. From theorem \ref{methode}, one obtains practical sufficient conditions related to the ramification data of a single Galois extension $E/k(T)$ of group $G$ with $E/k$ regular for this extension not to be $G$-parametric over $k$. Corollaries \ref{4 pts} and \ref{Sn gen} below are typical examples of this approach. We next apply these results to some classical Galois extensions of $k(T)$ of group $G$ ({\it{e.g.}} corollaries \ref{Z/2Z} and \ref{A5 2}).

\subsubsection{Galois extensions with four branch points} Let $G$ be a finite group and $k$ a number field. Assume that the following condition, which has already appeared in $\S$3.2 in the case $k=\mathbb{Q}$, is satisfied:

\vspace{1.5mm}

\noindent
(H1/$k$) {\it{there is a Galois extension $E/k(T)$ of group $G$ with $E/k$ regular and at least one $k$-rational branch point.}}

\vspace{1.5mm}

As already noted in $\S$3.2, not all finite groups satisfy condition (H1/$k$) for a given number field $k$. However every finite group satisfies condition (H1/$k$) for suitable number fields $k$. 

Indeed it classically follows from the Riemann existence theorem that, if $r$ is strictly bigger than the rank of $G$ and $t_1,\dots,t_r$ are $r$ distinct points in $\mathbb{P}^1(\overline{\mathbb{Q}}$), then there exists some Galois extension $\overline{E}/\overline{\mathbb{Q}}(T)$ of group $G$ and branch point set $\{t_1,\dots,t_r\}$ ({\it{e.g.}} \cite[$\S$12]{Deb01a}). Hence condition (H1/$k$) holds for every number field $k$ that is a field of definition of $\overline{E}/\overline{\mathbb{Q}}(T)$ and of at least one of its branch points.

\begin{corollary} \label{4 pts}
Let $E/k(T)$ be a Galois extension of group $G$ with $E/k$ regular and four branch points. Assume that none of them is $k$-rational. Then $E/k(T)$ satisfies the {\rm{(non $G$-parametricity)}} condition.
\end{corollary}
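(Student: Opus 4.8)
The plan is to invoke theorem \ref{methode} with $E_2=E$ and with $E_1/k(T)$ any extension furnished by the standing hypothesis (H1/$k$); thus $E_1/k(T)$ is Galois of group $G$, $E_1/k$ is regular, and $E_1/k(T)$ has a $k$-rational branch point. I will check that the \emph{Branch Point Hypothesis} holds for this pair. Concretely, I will prove that every prime of $A$ is a prime divisor of $m_{{\bf{\underline{t}}},1}(T)\cdot m_{1/{\bf{\underline{t}}},1}(T)$, whereas infinitely many primes fail to be prime divisors of $m_{{\bf{\underline{t}}},2}(T)\cdot m_{1/{\bf{\underline{t}}},2}(T)$; the difference set then supplies the infinitely many primes required by the Branch Point Hypothesis, and theorem \ref{methode} yields the conclusion.

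The $E_1$-side is immediate. Since $E_1/k(T)$ has a $k$-rational branch point, the product $m_{{\bf{\underline{t}}},1}(T)\cdot m_{1/{\bf{\underline{t}}},1}(T)$ has a linear factor over $k$ (namely $T-c$ if the rational branch point is a finite $c\in k$, and $T$ if it is $\infty$, since then $m_{1/\infty}(T)=m_0(T)=T$). Every prime $\mathcal{P}$ of $A$ is a prime divisor of such a linear polynomial: in definition \ref{diviseur} take $t_0=c$ in the first case and any $t_0\in \mathcal{P}A_{\mathcal{P}}\smallsetminus\{0\}$ in the second. Hence every prime is a prime divisor of $m_{{\bf{\underline{t}}},1}(T)\cdot m_{1/{\bf{\underline{t}}},1}(T)$.

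For the $E_2$-side, write $B=\{t_1,t_2,t_3,t_4\}$ for the four branch points, $B'=\{1/t_1,\dots,1/t_4\}$, and $\Omega=B\cup B'$, the set of roots of $M_2:=m_{{\bf{\underline{t}}},2}(T)\cdot m_{1/{\bf{\underline{t}}},2}(T)$. Let $N/k$ be the splitting field of $M_2$ and $H=\Gal(N/k)$, which acts on $\Omega$ preserving $B$ and $B'$. For all but finitely many primes $\mathcal{P}$, $\mathcal{P}$ is a prime divisor of $M_2$ if and only if the Frobenius at $\mathcal{P}$ fixes some point of $\Omega$, so by the Tchebotarev density theorem it suffices to exhibit a fixed-point-free element of $H$. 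I first produce one on $B$. As none of the branch points is $k$-rational, the $G_k$-orbits inside $B$ have no size-one member and sizes summing to $4$, so the orbit partition is $\{4\}$ or $\{2,2\}$. In the transitive case $\{4\}$, the image $H_B$ of $H$ in $\mathrm{Sym}(B)\cong S_4$ is transitive, and Jordan's theorem \cite{Jor72} provides a derangement. In the case $\{2,2\}$, writing $B=O_1\sqcup O_2$, the image of $H$ in $\mathrm{Sym}(O_1)\times\mathrm{Sym}(O_2)\cong\mathbb{Z}/2\mathbb{Z}\times\mathbb{Z}/2\mathbb{Z}$ surjects onto each factor (transitivity on each orbit), hence contains the element $(1,1)$ swapping both orbits, which is a derangement of $B$. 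In either case pick $\sigma\in H$ restricting to such a derangement $\bar\sigma$ of $B$.

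It remains to transfer this derangement to all of $\Omega$ using the reciprocal involution $\iota\colon x\mapsto 1/x$, which is a $G_k$-equivariant bijection $B\to B'$ and is fixed-point-free on $\Omega$ (its only fixed points $\pm 1$ are $k$-rational, hence not branch points). For $x\in B$ one has $\sigma(1/x)=1/\sigma(x)$, so $\sigma$ fixes $1/x\in B'$ if and only if it fixes $x\in B$; thus $\sigma$, being a derangement on $B$, is a derangement on $B'$, hence on $\Omega=B\cup B'$. By Tchebotarev there are infinitely many primes whose Frobenius lies in the conjugacy class of $\sigma$; these are not prime divisors of $M_2$ but are prime divisors of $m_{{\bf{\underline{t}}},1}(T)\cdot m_{1/{\bf{\underline{t}}},1}(T)$, so the Branch Point Hypothesis holds and theorem \ref{methode} gives the (non $G$-parametricity) conclusion. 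The main obstacle is precisely this $E_2$-side: for a general product of nonlinear irreducible polynomials it can happen that \emph{every} prime is a prime divisor (for instance an irreducible cubic together with its quadratic resolvent, where transpositions, $3$-cycles and the identity between them always produce a root). The proof genuinely uses that four branch points force the orbit sizes $\{4\}$ or $\{2,2\}$, ruling out such cubic obstructions, together with the equivariant fixed-point-free reciprocal involution that carries a derangement on $B$ over to the reciprocal points.
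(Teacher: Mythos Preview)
Your proof is correct and follows the same overall strategy as the paper: verify the Branch Point Hypothesis for the pair $(E_1,E)$ with $E_1$ supplied by (H1/$k$), then invoke theorem~\ref{methode}. The execution differs in two minor but noteworthy ways. First, the paper invokes remark~\ref{drop2} (no $k$-rational branch point, so $\infty\notin B$) to drop the reciprocal factor $m_{1/{\bf{\underline{t}}},2}$ and work with $m_{{\bf{\underline{t}}},2}$ alone; you instead keep the full product and transport the derangement from $B$ to $B'$ via the $G_k$-equivariant map $x\mapsto 1/x$. Both work, and your remark that $\pm 1\notin B$ is not actually needed for that transport. Second, in the $\{2,2\}$ orbit case the paper splits on whether $k(t_2)=k(t_3)$, whereas your argument that any subgroup of $(\Zz/2\Zz)^2$ surjecting onto both factors must contain $(1,1)$ handles both subcases at once; this is a small simplification.
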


In the case $E/k(T)$ has at least one $k$-rational branch point, the proof below fails. However the conclusion of corollary \ref{4 pts} may still happen: we give in \cite{Leg15} a Galois extension $E/\mathbb{Q}(T)$ of group $S_3$ such that $E/\mathbb{Q}$ is regular and with two $\mathbb{Q}$-rational and two complex conjugate branch points which satisfies the {\rm{(non $S_3$-parametricity)}} condition.

\begin{proof}
The branch points of $E/k(T)$ lead to either only one orbit $O_1$ of cardinality 4 or two orbits $O_2$ and $O_3$ of cardinality 2 under the action of ${\rm{G}}_k$. For each index $i \in \{1,2,3\}$, pick $t_i \in O_i$. From remark \ref{drop2} and our assumption on the branch points, the polynomials $m_{t_1}(T)$ and $m_{\bf{\underline{t}}}(T) \cdot m_{1/\bf{\underline{t}}}(T)$ in the first situation, $m_{t_2}(T) \cdot m_{t_3}(T)$ and $m_{\bf{\underline{t}}}(T) \cdot m_{1/\bf{\underline{t}}}(T)$ in the second one, have the same prime divisors (up to finitely many). We show below that there exist infinitely many distinct primes of the integral closure $A$ of $\mathbb{Z}$ in $k$ which each is not a prime divisor of $m_{t_1}(T)$ (resp. of $m_{t_2}(T) \cdot m_{t_3}(T)$). With $E'/k(T)$ any Galois extension of group $G$ with $E'/k$ regular and at least one $k$-rational branch point (condition (H1/$k$)), this shows that $E'/k(T)$ and $E/k(T)$ satisfy the Branch Point Hypothesis. Theorem \ref{methode} provides the desired conclusion.

In the first case, our claim follows from the irreducibility of $m_{t_1}(T)$ over $k$ and the fact that ${\rm{deg}}(m_{t_1}(T)) \geq 2$ ({\it{e.g.}} \cite[theorem 9]{Hei67}). In the second one, first assume that $k(t_2)=k(t_3)$. Then $m_{t_2}(T)$ and $m_{t_3}(T)$ have the same prime divisors (up to finitely many). As in the first case, there exist infinitely many distinct primes which each is not a prime divisor of $m_{t_2}(T)$, and so not of $m_{t_2}(T) \cdot m_{t_3}(T)$ either. 

Now assume that $k(t_2) \not= k(t_3)$. For each index $i \in \{2,3\}$, let $\sigma_i \, : \, {\rm{Gal}}(k(t_i)/k) \rightarrow S_{2}$ be the action of ${\rm{Gal}}(k(t_i)/k)$ on the roots of $m_{t_i}(T)$. Then ${\rm{Gal}}(k(t_2,t_3)/k)$ is isomorphic to ${\rm{Gal}}(k(t_2)/k) \times {\rm{Gal}}(k(t_3)/k)$ and $\sigma_2 \times \sigma_3 : {\rm{Gal}}(k(t_2)/k) \times {\rm{Gal}}(k(t_3)/k) \rightarrow S_{4}$ corresponds to the action of ${\rm{Gal}}(k(t_2,t_3)/k)$ on the roots of $m_{t_2}(T) \cdot m_{t_3}(T)$. From the Tchebotarev density theorem, there exist infinitely many distinct primes of $A$ such that the associated Frobenius is conjugate in ${\rm{Gal}}(k(t_2,t_3)/k)$ to $(g_2,g_3)$ where, for each index $i \in \{2,3\}$, $g_i$ denotes the unique non trivial element of ${\rm{Gal}}(k(t_i)/k)$. Hence none of these primes is a prime divisor of $m_{t_2}(T) \cdot m_{t_3}(T)$, thus ending the proof.
\end{proof}

We now apply corollary \ref{4 pts} to some classical Galois extensions of $k(T)$. Our first example is devoted to quadratic extensions while our second one is concerned with a Galois extension of group the alternating group $A_5$ produced by Mestre \cite{Mes90}. 

First remark that condition (H1/$k$) holds for each of these two groups over any number field $k$ ({\it{e.g.}} \cite[proposition 7.4.1 and theorem 8.2.2]{Ser92} for $A_5$).

\vspace{2.5mm} 

\noindent
(a) Let $k$ be a number field, $P(T) \in k[T]$ a separable polynomial and $\{t_1,\dots,t_r\}$ its root set. One easily shows that $(1,\sqrt{P(T)})$ is a $\overline{\mathbb{Q}}[T]$-basis of the integral closure of $\overline{\mathbb{Q}}[T]$ in $\overline{\mathbb{Q}}(T)(\sqrt{P(T)})$. Hence the branch point set of the extension $k(T)(\sqrt{P(T)})/k(T)$ is either $\{t_1,\dots,t_r\}$ or $\{t_1,\dots,t_r\} \cup \{\infty\}$. Moreover its branch point number is even from the Riemann-Hurwitz formula. Then corollary \ref{Z/2Z} below follows:

\begin{corollary} \label{Z/2Z}
Assume that ${\rm{deg}}(P(T))=4$ and $P(T)$ has no root in $k$. Then the extension $k(T)(\sqrt{P(T)})/k(T)$ satisfies the {\rm{(non $\mathbb{Z}/2\mathbb{Z}$-parametricity)}} condition.
\end{corollary}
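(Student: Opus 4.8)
The plan is to deduce corollary \ref{Z/2Z} directly from corollary \ref{4 pts}, applied with $G=\mathbb{Z}/2\mathbb{Z}$ and $E/k(T)=k(T)(\sqrt{P(T)})/k(T)$. First I would check the structural hypotheses of corollary \ref{4 pts}. Since $P(T)$ is separable of degree $4$, its four roots are distinct and $P(T)$ is not a square in $\overline{k}(T)$; hence $y^2-P(T)$ is irreducible over $\overline{k}(T)$, which simultaneously gives $[E:k(T)]=2$, so ${\rm{Gal}}(E/k(T))\cong\mathbb{Z}/2\mathbb{Z}$, and the geometric irreducibility of the curve $y^2=P(T)$, that is $E\cap\overline{k}=k$, so that $E/k$ is regular.

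The key step is to determine the branch point set and to show that it consists of four non-$k$-rational points. By the computation recalled just before the statement, the branch points of $E/k(T)$ lie among the roots $t_1,\dots,t_4$ of $P(T)$ together possibly with $\infty$, and their number is even by Riemann--Hurwitz. As there are exactly four finite candidates, including $\infty$ would force an odd number ($5$) of branch points, which is impossible; hence the branch point set is precisely $\{t_1,\dots,t_4\}$ and $\infty$ is not a branch point. The points $t_1,\dots,t_4$ are the roots of $P(T)$, none of which lies in $k$ by hypothesis, and $\infty$, the only remaining $k$-rational point of $\mathbb{P}^1$, has just been excluded from the branch locus. Thus $E/k(T)$ has four branch points, none of them $k$-rational.

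With these facts in hand, corollary \ref{4 pts} applies verbatim and yields the (non $\mathbb{Z}/2\mathbb{Z}$-parametricity) condition. I expect no genuine difficulty in this argument: the only point that needs care is the parity argument ruling $\infty$ out of the branch locus, since it is precisely this that both pins the number of branch points at four and guarantees that none of them is $k$-rational, thereby making corollary \ref{4 pts} available.
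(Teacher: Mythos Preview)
Your proof is correct and follows essentially the same route as the paper: use the parity of the branch point number to rule out $\infty$, conclude that the branch locus consists exactly of the four roots of $P(T)$ (none of which is $k$-rational by hypothesis), and apply corollary~\ref{4 pts}. The paper presents this reasoning in the paragraph immediately preceding the statement and simply says ``Then corollary~\ref{Z/2Z} below follows''; your write-up just makes the regularity and Galois-group checks explicit.
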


\noindent
(b) Let $k$ be a number field such that $\mathbb{Q}(i) \subset k$. From \cite{Mes90}, the splitting field over $k(T)$ of $P(T,X)= (X^5-X) - T \, (25X^4-9)$ provides a Galois extension $E/k(T)$ of group $A_5$ with $E/k$ regular. Its branch points are the roots of the polynomial $S(T)= 1 + (5^5 \cdot 3^3) \, T^{4}$. 

From $3 \, S(T/15)=3+5T^4 = (1/5) \, (5T^2-i \sqrt{15}) \, (5T^2+i \sqrt{15})$ (and the fact that $i \in k$), the following two conditions hold:

\noindent
- the polynomial $S(T)$ is irreducible over $k$ if and only if $\sqrt{15} \not  \in k$,

\noindent
- the polynomial $S(T)$ is the product of two quadratic irreducible polynomials over $k$ if and only if $5i\sqrt{15} \in k \smallsetminus k^2$.

\noindent
Then corollary \ref{A5 2} below follows:

\begin{corollary} \label{A5 2}
Assume that $\mathbb{Q}(i) \subset k$ and $5i\sqrt{15} \not \in k^2$. Then the extension $E/k(T)$ satisfies the {\rm{(non $A_5$-parametricity)}} condition.
\end{corollary}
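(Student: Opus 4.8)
The plan is to reduce everything to Corollary \ref{4 pts}. By Mestre's construction \cite{Mes90}, $E/k(T)$ is a Galois extension of group $A_5$ with $E/k$ regular whose branch point set consists exactly of the four roots of $S(T)=1+(5^5\cdot 3^3)T^4$; this polynomial is separable (its derivative vanishes only at $T=0$, where $S$ does not), so there are indeed four distinct branch points. Moreover, as recalled just before the statement, condition (H1/$k$) holds for $A_5$ over every number field $k$. Hence the only hypothesis of Corollary \ref{4 pts} left to verify is that none of the four branch points is $k$-rational, equivalently that $S(T)$ has no root in $k$; once this is established, Corollary \ref{4 pts} immediately yields the (non $A_5$-parametricity) condition.

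First I would dispose of the case $\sqrt{15}\notin k$. Then the first of the two displayed equivalences preceding the statement shows that $S(T)$ is irreducible over $k$, and being of degree four it can have no root in $k$; thus no branch point is $k$-rational.

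It remains to treat the case $\sqrt{15}\in k$. Since $\mathbb{Q}(i)\subset k$ forces $5i\sqrt{15}\in k$, the assumption $5i\sqrt{15}\notin k^2$ puts us precisely in the situation $5i\sqrt{15}\in k\smallsetminus k^2$ of the second displayed equivalence, so that $S(T)$ splits over $k$ into two \emph{irreducible} quadratic factors. In particular $S(T)$ again has no $k$-rational root, and once more no branch point is $k$-rational.

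The one point requiring care -- and the reason both hypotheses $\mathbb{Q}(i)\subset k$ and $5i\sqrt{15}\notin k^2$ enter -- is the simultaneous irreducibility of \emph{both} quadratic factors in this second case. Using the factorization $3\,S(T/15)=(1/5)\,(5T^2-i\sqrt{15})(5T^2+i\sqrt{15})$, the factor $5T^2-i\sqrt{15}$ is irreducible over $k$ iff $i\sqrt{15}/5\notin k^2$, i.e. (multiplying by $25\in k^2$) iff $5i\sqrt{15}\notin k^2$; and because $i\in k$ makes $-1=i^2$ a square in $k$, the companion factor $5T^2+i\sqrt{15}$, corresponding to $-i\sqrt{15}/5$, is irreducible under exactly the same condition. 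Thus the single assumption $5i\sqrt{15}\notin k^2$ is what rules out a $k$-rational branch point in both subcases, after which the statement follows from Corollary \ref{4 pts}.
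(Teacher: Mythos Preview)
Your proof is correct and follows exactly the approach of the paper: reduce to Corollary \ref{4 pts} by showing that the hypotheses force $S(T)$ to have no $k$-rational root, splitting into the cases $\sqrt{15}\notin k$ (where $S(T)$ is irreducible) and $\sqrt{15}\in k$ (where $S(T)$ is a product of two irreducible quadratics). Your final paragraph, explaining why the single condition $5i\sqrt{15}\notin k^2$ governs the irreducibility of both quadratic factors simultaneously, spells out what the paper leaves implicit in its second displayed equivalence.
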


\subsubsection{Regular realizations of symmetric groups} Let $n \geq 3$ be an integer. Recall that {\it{the type of a permutation}} $\sigma \in S_n$ is the (multiplicative) divisor of all lengths of disjoint cycles involved in the cycle decomposition of $\sigma$ (for example, an $n$-cycle is of type $n^1$). The conjugacy class in $S_n$ of elements of type $1^{l_1} \dots n^{l_n}$ is denoted by $[1^{l_1} \dots n^{l_n}]$.

Let $k$ be a number field and $E/k(T)$ a Galois extension of group $S_n$ with $E/k$ regular. Denote its inertia canonical invariant by $(C_1,\dots,C_r)$. 

\begin{corollary} \label{Sn gen}
Assume that the following condition holds:

\vspace{1mm}

\noindent
{\rm{(H2)}} one of the conjugacy classes $[n^1]$, $[m^1 (n-m)^1]$, where $m$ is any integer such that $1 \leq m \leq n$ and $(m,n)=1$, is not in $\{C_1,\dots,C_r\}$.

\vspace{1mm}

\noindent
Then $E/k(T)$ satisfies the {\rm{(geometric non $S_n$-parametricity)}} condition.
\end{corollary}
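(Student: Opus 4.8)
The plan is to deduce the statement from Theorem \ref{methode}, or rather from its field-independent refinement in Remark \ref{rem geo}: it suffices to exhibit an auxiliary Galois extension $E_1/k(T)$ of group $S_n$ with $E_1/k$ regular for which the Inertia Hypothesis holds relative to $E_2 := E$. Since the Inertia Hypothesis is a condition on conjugacy classes of $S_n$ alone, verifying it for a single such $E_1$ immediately produces, via Remark \ref{rem geo}, the geometric non $S_n$-parametricity of $E$.

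For the auxiliary extension I would take one of the trinomial realizations of $S_n$ recalled in \S3.2 (see \cite{Sch00}): for parameters $m,q,r$ with $1 \leq m \leq n-1$, $(m,n)=1$ and $q(n-m)-rn=1$, the splitting extension of $X^n - T^r X^m + T^q$ over $k(T)$ is regular of group $S_n$ with inertia canonical invariant $\{[m^1 (n-m)^1], [n^1], [2^1 1^{n-2}]\}$ (the construction is geometric, hence valid over any number field $k$). Condition (H2) hands me a class $C^*$, either $[n^1]$ or some $[m^1(n-m)^1]$, lying outside $\{C_1,\dots,C_r\}$. In the first case any admissible parameters work; in the second I choose the trinomial with exactly this $m$, which is admissible since $\gcd(n-m,n)=\gcd(m,n)=1$ makes $q(n-m)-rn=1$ solvable in positive integers. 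In either case $C^*$ is one of the inertia classes of $E_1$.

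It then remains to check the Inertia Hypothesis, namely that $C^* \notin \{C_i^a \,/\, i \in \{1,\dots,r\},\ a \in \mathbb{N}\}$, and this is the crux. The key input is a purely group-theoretic lemma: if $C^*$ is $[n^1]$ or $[m^1(n-m)^1]$ with $(m,n)=1$ and $n \geq 3$, then $C^a = C^*$ forces $C = C^*$. I would argue by orbit refinement: writing $g \in C$ with $g^a \in C^*$, the inclusion $\langle g^a \rangle \subseteq \langle g \rangle$ shows the cycles of $g^a$ refine the orbits of $g$, so $g$ has at most as many orbits as $g^a$ has cycles. If $C^* = [n^1]$, then $g^a$ is transitive, hence so is $g$, forcing $g$ to be an $n$-cycle. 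If $C^* = [m^1(n-m)^1]$, then $g$ has at most two orbits: a single orbit would make $g$ an $n$-cycle, all of whose powers have cycles of one common length, contradicting $m \neq n-m$ (which holds since $\gcd(m,n)=1$ and $n \geq 3$); two orbits must correspond one-to-one to the two cycles of $g^a$, so $g$ is itself of type $m^1(n-m)^1$. Either way $C = C^*$. Granting the lemma together with (H2), one gets $C^* \notin \{C_i\}$, hence $C^* \notin \{C_i^a\}$, so the inertia set of $E_1$ is not contained in $\{C_i^a\}$ and the Inertia Hypothesis holds; Remark \ref{rem geo} then yields the conclusion. The main obstacle is precisely this orbit-refinement lemma, which pins down that the distinguished cycle types $[n^1]$ and $[m^1(n-m)^1]$ arise as a power of a class only from themselves; the remainder is an appeal to the trinomial construction and to Theorem \ref{methode}.
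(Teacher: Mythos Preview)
Your proof is correct and follows the same strategy as the paper: exhibit the trinomial (three-branch-point) realization of $S_n$ with inertia classes $\{[n^1],[m^1(n-m)^1],[1^{n-2}2^1]\}$ as the auxiliary extension $E_1$, verify the Inertia Hypothesis for $(E_1,E)$, and invoke Remark \ref{rem geo}. The only difference is that you spell out the orbit-refinement lemma showing that $[n^1]$ and $[m^1(n-m)^1]$ cannot arise as a nontrivial power of any other class, whereas the paper states this implication without proof.
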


An example of Galois extension of $k(T)$ of group $S_n$ satisfying condition (H2) is given after the proof below.

\begin{proof}
First assume that $[n^1]$ is not in $\{C_1,\dots,C_r\}$. Then $[n^1]$ is not in $\{C_1^a,\dots,C_r^a \, / \, a \in \mathbb{N}\}$ either. With $E'/k(T)$ any Galois extension of group $S_n$ with $E/k$ regular and inertia canonical invariant $([n^1], [m^1(n-m)^1], [1^{n-2} 2^1])$ \cite[$\S$7.4.1 and theorem 8.1.1]{Ser92}, this shows that the two extensions $E'/k(T)$ and $E/k(T)$ satisfy the Inertia Hypothesis. Remark \ref{rem geo} provides the desired conclusion.

If $[m^1 (n-m)^1]$ is not in $\{C_1,\dots,C_r\}$ for some $m$ as in the statement, repeat the same argument with $[n^1]$ replaced by $[m^1 (n-m)^1]$.
\end{proof}

For example, the regular realization of $S_n$ used in the proof satisfies condition (H2) if $\varphi(n) \geq 3$ (with $\varphi$ the Euler function), {\it{i.e.}} if $n=5$ or $n \geq 7$. Other examples are given in \cite{Leg15}.

Moreover condition (H2) obviously holds if $r \leq \varphi(n)/2$. This and the Riemann existence theorem may be conjoined to give examples of non $S_n$-parametric extensions over suitable number fields if $\varphi(n) \geq 6$.

\bibliography{Biblio2}

\newcommand{\etalchar}[1]{$^{#1}$}
\begin{thebibliography}{NSW08}

\bibitem[Bec91]{Bec91}
Sybilla Beckmann.
\newblock On extensions of number fields obtained by specializing branched
  coverings.
\newblock {\em J. Reine Angew. Math.}, 419:27--53, 1991.

\bibitem[Bec94]{Bec94}
Sybilla Beckmann.
\newblock Is every extension of $\mathbb{Q}$ the specialization of a branched
  covering?
\newblock {\em J. Algebra}, 164(2):430--451, 1994.

\bibitem[C{\etalchar{+}}85]{ATL}
J~.H. Conway et~al.
\newblock {\em Atlas of finite groups. Maximal subgroups and ordinary
  characters for simple groups. With computational assistance from J. G.
  Thackray}.
\newblock Oxford University Press, Eynsham, 1985.

\bibitem[D{\`{e}}b99]{Deb99a}
Pierre D{\`{e}}bes.
\newblock Galois covers with prescribed fibers: the {B}eckmann-{B}lack problem.
\newblock {\em Ann. Scuola Norm. Sup. Pisa Cl. Sci. (4)}, 28(2):273--286, 1999.

\bibitem[D{\`e}b01]{Deb01a}
Pierre D{\`e}bes.
\newblock M\'ethodes topologiques et analytiques en th\'eorie inverse de
  {G}alois: th\'eor\`eme d'existence de {R}iemann.
\newblock In {\em Arithm\'etique de rev\^etements alg\'ebriques
  ({S}aint-{\'E}tienne, 2000)}, volume~5 of {\em S\'emin. Congr.}, pages
  27--41. Soc. Math. France, Paris, 2001.

\bibitem[D{\`e}b09]{Deb09}
Pierre D{\`e}bes.
\newblock {\em Arithm\'etique des rev\^etements de la droite}.
\newblock {L}ecture notes, 2009.
\newblock {A}t http://math.univ-lille1.fr/\~{}pde/ens.html.

\bibitem[DF90]{DF90}
Pierre D{\`e}bes and Michael~D. Fried.
\newblock Rigidity and real residue class fields.
\newblock {\em Acta Arith.}, 56(4):291--323, 1990.

\bibitem[DG11]{DG11}
Pierre D\`ebes and Nour Ghazi.
\newblock Specializations of {G}alois covers of the line.
\newblock In {\em ``{A}lexandru {M}yller" {M}athematical {S}eminar}, volume
  1329 of {\em AIP Conf. Proc.}, pages 98--108. Amer. Inst. Phys., Melville,
  NY, 2011.

\bibitem[DG12]{DG12}
Pierre D{\`e}bes and Nour Ghazi.
\newblock Galois covers and the {H}ilbert-{G}runwald property.
\newblock {\em Ann. Inst. Fourier (Grenoble)}, 62(3):989--1013, 2012.

\bibitem[DL12]{DL12}
Pierre D\`{e}bes and Fran\c{c}ois Legrand.
\newblock Twisted covers and specializations.
\newblock In {\em Galois-{T}eichmueller theory and {A}rithmetic {G}eometry},
  pages 141--162. Proceedings for {C}onferences in {K}yoto (October 2010), {H}.
  {N}akamura, {F}. {P}op, {L}. {S}chneps, {A}. {T}amagawa eds., {A}dvanced
  {S}tudies in {P}ure {M}athematics 63, 2012.

\bibitem[DL13]{DL13}
Pierre D\`ebes and Fran\c{c}ois Legrand.
\newblock Specialization results in {G}alois theory.
\newblock {\em Trans. Amer. Math. Soc.}, 365(10):5259--5275, 2013.

\bibitem[Flo02]{Flo02}
St\'ephane Flon.
\newblock {\em Mauvaises places ramifi\'ees dans le corps des modules d'un
  rev\^etement}.
\newblock PhD thesis, Universit\'e des {S}ciences et {T}echnologies de {L}ille,
  France, 2002.

\bibitem[Fri95]{Fri95}
Michael~D. Fried.
\newblock Introduction to modular towers: generalizing dihedral group--modular
  curve connections.
\newblock In {\em Recent developments in the inverse Galois problem (Seattle,
  WA, 1993)}, volume 186 of {\em Contemp. Math.}, pages 111--171. Amer. Math.
  Soc., Providence, RI, 1995.

\bibitem[Gey78]{Gey78}
Wulf-Dieter Geyer.
\newblock Galois groups of intersections of local fields.
\newblock {\em Israel J. Math.}, 30(4):382--396, 1978.

\bibitem[Hei67]{Hei67}
Hans~Arnold Heilbronn.
\newblock Zeta-functions and {L}-functions.
\newblock In {\em Algebraic {N}umber {T}heory ({P}roc. {I}nstructional {C}onf.,
  {B}righton, 1965)}, pages 204--230. Thompson, {W}ashington, {D}.{C}., 1967.

\bibitem[JLY02]{JLY02}
Christian~U. Jensen, Arne Ledet, and Noriko Yui.
\newblock {\em Generic polynomials. Constructive Aspects of the Inverse Galois
  Problem}.
\newblock Cambridge University Press, 2002.

\bibitem[Jor72]{Jor72}
Camille Jordan.
\newblock Recherches sur les substitutions.
\newblock {\em J. Liouville}, 17:351--367, 1872.

\bibitem[KM04]{KM04}
J\"urgen Kl\"{u}ners and Gunter Malle.
\newblock Counting nilpotent {G}alois extensions.
\newblock {\em J. Reine Angew. Math.}, 572:1--26, 2004.

\bibitem[Lan02]{Lan02}
Serge Lang.
\newblock {\em Algebra}, volume 211 of {\em Graduate {T}exts in {M}athematics}.
\newblock Springer-Verlag, New York, revised third edition, 2002.

\bibitem[Leg13]{Leg13c}
Fran\c{c}ois Legrand.
\newblock {\em Sp\'ecialisations de rev\^etements et th\'eorie inverse de
  {G}alois}.
\newblock PhD thesis, Universit\'e Lille 1, France, 2013.
\newblock At https://sites.google.com/site/francoislegranden/recherche.

\bibitem[Leg15]{Leg15}
Fran\c{c}ois Legrand.
\newblock Parametric {G}alois extensions.
\newblock {\em Journal of Algebra}, 422:187--222, 2015.

\bibitem[Mes90]{Mes90}
Jean-{F}ran\c{c}ois Mestre.
\newblock Extensions r\'eguli\`eres de $\mathbb{Q}({T})$ de groupe de {G}alois
  $\tilde{A}_n$.
\newblock {\em J. Algebra}, 131(2):483--495, 1990.

\bibitem[MM99]{MM99}
Gunter Malle and B.~Heinrich Matzat.
\newblock {\em Inverse Galois Theory}.
\newblock Springer Monographs in Mathematics. Springer-Verlag, Berlin, 1999.

\bibitem[Neu79]{Neu79}
J\"urgen Neukirch.
\newblock On solvable number fields.
\newblock {\em Invent. Math.}, 53(2):135--164, 1979.

\bibitem[NSW08]{NSW08}
J\"urgen Neukirch, Alexander Schmidt, and Kay Wingberg.
\newblock {\em Cohomology of Number Fields}, volume 323 of {\em Grundlehren de
  mathematischen Wissenschaften}.
\newblock Springer, Berlin, second edition, 2008.

\bibitem[PV05]{PV05}
Bernat Plans and N\'{u}ria Vila.
\newblock {G}alois covers of $\mathbb{P}^1$ over $\mathbb{Q}$ with prescribed
  local or global behavior by specialization.
\newblock {\em J. Th\'eor. Nombres Bordeaux}, 17(1):271--282, 2005.

\bibitem[Sch00]{Sch00}
Andrzej Schinzel.
\newblock {\em Polynomials with special regard to reducibility}, volume~77 of
  {\em Encyclopedia of Mathematics and its Applications}.
\newblock Cambridge University Press, Cambridge, 2000.

\bibitem[Ser92]{Ser92}
Jean-Pierre Serre.
\newblock {\em Topics in Galois Theory}, volume~1 of {\em Research Notes in
  Mathematics}.
\newblock Jones and Bartlett Publishers, Boston, MA, 1992.

\bibitem[Tra90]{Tra90}
Artur Travesa.
\newblock Nombre d'extensions ab\'eliennes sur $\mathbb{Q}$.
\newblock {\em S\'em. Th\'eor. Nombres Bordeaux (2)}, 2(2):413--423, 1990.

\end{thebibliography}
\bibliographystyle{alpha}

\end{document}